\renewcommand*{\mod}{\mathrm{mod}\,}
\newcommand{\ind}{\mathrm{ind}\,}
\newcommand{\Hom}{\mathrm{Hom}}
\newcommand{\Tr}{\mathrm{Tr}}
\newcommand{\op}{\mathrm{op}}
\newcommand{\End}{\mathrm{End}}
\newcommand{\Ext}{\mathrm{Ext}}
\newcommand{\rad}{\mathrm{rad}\,}
\newcommand{\add}{\mathrm{add}}
\newcommand{\Tor}{\mathrm{Tor}}
\newcommand{\ann}{\mathrm{ann}}
\renewcommand{\@seccntformat}[1]{\normalsize{\csname
the#1\endcsname.\hspace{0,3cm}}} \makeatother
\newtheorem{thm}{\sc Theorem}[section]
\newtheorem{cor}[thm]{\sc Corollary}
\newtheorem{lem}[thm]{\sc Lemma}
\newtheorem{prop}[thm]{\sc Proposition}
\renewcommand\@biblabel[1]{#1.}
\title{{ \large CLASSIFICATION OF MODULES NOT LYING\\ ON SHORT CHAINS}}
\author{\small  ALICJA JAWORSKA, PIOTR MALICKI, AND ANDRZEJ SKOWRO{\'N}SKI}
\date{}
\begin{document}
\bibstyle{numeric}
\maketitle

\begin{abstract}
\noindent We  give a complete description of finitely generated modules over artin algebras which are
not the middle of a short chain of modules, using injective and tilting modules over hereditary artin algebras.
\end{abstract}

\section{\normalsize Introduction}
Throughout the paper, by an algebra we mean an artin algebra over
a fixed commutative artin ring $R$, that is, an $R$-algebra
(associative, with identity) which is finitely generated as an
$R$-module. For an algebra $A$, we denote by $\mod A$ the category
of finitely generated right $A$-modules, by $\ind A$ the full
subcategory of $\mod A$ formed by the indecomposable modules, by
$K_0(A)$ the Grothendieck group of $A$, and by $[M]$ the image of
a module $M$ from $\mod A$ in $K_0(A)$. Then $[M]=[N]$ for two
modules $M$ and $N$ in $\mod A$ if and only if $M$ and $N$  have
the same (simple) composition factors including the
multiplicities. A module $M$ in $\mod A$ is called sincere if
every simple right $A$-module occurs as a composition factor of
$M$. Further, we denote by $D$ the standard duality $\Hom_R(-,E)$
on $\mod A$,
 where $E$ is a minimal injective cogenerator in $\mod R$. Moreover, for a module $X$ in $\mod A$ and its minimal projective
 presentation $\xymatrix@C=13pt{P_1 \ar[r]^f &  P_0 \ar[r] &X \ar[r] &0}$ in $\mod A$, the transpose $\Tr X$ of $X$ is the
cokernel of the homomorphism $\Hom_A(f, A)$ in $\mod A^{\op}$,
where $A^{\op}$ is the opposite algebra of $A$. Then we obtain the
homological operator $\tau_A=D\Tr$ on modules in $\mod A$, called
the Auslander-Reiten translation, playing a fundamental role in
the modern representation theory of artin algebras.

The aim of this article is to provide a complete description of all modules $M$ in $\mod A$ satisfying the condition:
 for  any module $X$ in $\ind A$, we have $\Hom_A(X,M)=0$ or $\Hom_A(M, \tau_AX)=0$. We note that, by \cite{AR},
 \cite{RSS}, a sequence $\xymatrix@C=13pt{X \ar[r] & M \ar[r] & \tau_AX}$ of nonzero
homomorphisms in  $\mod A$ with $X$ being indecomposable is called
a short chain, and $M$ the middle of this short chain. Therefore,
we are concerned with the classification of all modules in $\mod
A$ which are not the middle of a short chain. We also mention
that, if $M$ is a module in $\mod A$ which is not the middle of a
short chain, then $\Hom_A(M, \tau_AM)=0$, and hence the number of
pairwise nonisomorphic indecomposable direct summands of $M$ is
less than or equal to the rank of $K_0(A)$, by \cite[Lemma 2]{S3}.
Further, by \cite[Theorem 1.6]{RSS} and \cite[Lemma 1]{HL}, an
indecomposable module $X$ in $\mod A$ is not the middle of a short
chain if and only if $X$ does not lie on a short cycle
$\xymatrix@C=13pt{Y \ar[r] & X \ar[r] & Y}$ of nonzero
nonisomorphisms in $\ind A$. Hence, every indecomposable direct
summand $Z$ of a module $M$ in $\mod A$ which is not the middle of
a short chain is uniquely determined (up to isomorphism) by the
composition factors (see \cite[Corollary 2.2]{RSS}). Finally, we
point out that the class of modules which are not the middle of a
short chain contains the class of directing modules investigated
in \cite{Bak}, \cite{HRi2}, \cite{Ri1}, \cite{S3}, \cite{S4},
\cite{SW}.

Following \cite{Bon}, \cite{HRi1}, by a tilted algebra we mean an
algebra of the form $\End_H(T)$, where $H$ is a hereditary algebra
and $T$ is a tilting module in $\mod H$, that is,
$\Ext^1_H(T,T)=0$ and the number of pairwise nonisomorphic
indecomposable direct summands of $T$ is equal to the rank of
$K_0(H)$. The tilted algebras play  a prominent role in the
representation theory of algebras and have attracted much
attention
(see \cite{ASS}, \cite{Ha}, \cite{MS}, \cite{Ri1}, \cite{Ri2}, \cite{SS1}, \cite{SS2} and their cited papers).\\

The following theorem is the main result of the paper.
\begin{thm} \label{thm 1.1}
Let $A$ be an algebra and $M$ a module in $\mod A$ which is not
the middle of a short chain. Then there exists a hereditary
algebra $H$, a tilting module $T$ in $\mod H$, and an injective
module $I$ in $\mod H$ such that the following statements hold:
\begin{enumerate}
\renewcommand{\labelenumi}{\rm(\roman{enumi})}
\item the tilted algebra $B=\End_H(T)$ is a quotient algebra of $A$;
\item $M$ is isomorphic to the right $B$-module $\Hom_H(T,I)$.
\end{enumerate}
\end{thm}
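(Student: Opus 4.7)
The plan is to reduce the problem to the case where $M$ is sincere, then invoke a characterization of tilted algebras through sincere modules that do not lie on short chains, and finally transport $M$ across the tilting functor of such a tilted algebra.

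First I would pass to the quotient $B_0 = A/\ann_A M$ and view $M$ as a sincere $B_0$-module. A small technical lemma is needed here to show that the property of not being the middle of a short chain survives the reduction: any short chain $X \to M \to \tau_{B_0} X$ in $\mod B_0$ must yield one in $\mod A$, using the comparison between $\tau_{B_0}$ and $\tau_A$ on modules whose composition factors lie in $\mod B_0$, together with the fact from the introduction that indecomposable summands of $M$ are determined up to isomorphism by their composition factors. Assuming this, it is enough to treat sincere $M$; the statement ``$B$ is a quotient of $A$'' in (i) then holds because $B$ will ultimately be a further quotient of $B_0$ (or $B_0$ itself).

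The heart of the argument is to show that a sincere module $M$ in $\mod B_0$ which is not the middle of a short chain forces the support algebra $B$ of $M$ to be tilted. The available rigidity is considerable: $\Hom_B(M, \tau_B M) = 0$ bounds the number of pairwise nonisomorphic indecomposable summands of $M$ by $\mathrm{rk}\,K_0(B)$ via \cite[Lemma 2]{S3}; sincerity then pins down the classes $[M_j]$ in $K_0(B)$; and the absence of short cycles through the summands $M_j$ given by \cite[Theorem 1.6]{RSS} and \cite[Lemma 1]{HL} arranges the $M_j$ along a section-like subquiver of the Auslander--Reiten quiver $\Gamma_B$. The aim is to upgrade this configuration to a genuine complete slice in $\Gamma_B$ in the sense of Ringel, which by the classical characterization of tilted algebras yields a hereditary $H$ and a tilting $T \in \mod H$ with $B \cong \End_H(T)$, and identifies the connecting component of $\Gamma_B$ with the image of the postprojective (or, dually, the preinjective) component of $\Gamma_H$ under $\Hom_H(T,-)$.

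Once $B = \End_H(T)$ has been produced, the final step identifies $M$ inside the image of the tilting functor. Injective $H$-modules lie in the torsion class $\{X : \Ext^1_H(T, X) = 0\}$, and under $\Hom_H(T, -)$ they populate the rightmost end of the connecting component of $\Gamma_B$. Matching each indecomposable summand $M_j$ with an indecomposable injective $I_j \in \mod H$ via its position in the slice, and setting $I = \bigoplus I_j$, one obtains $M \cong \Hom_H(T, I)$.

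The main obstacle is clearly the middle step: turning the combinatorial constraints imposed by ``not the middle of a short chain'' into the existence of a complete slice. The sincerity reduction is soft, and once the tilted structure is in place the identification $M \cong \Hom_H(T, I)$ is essentially bookkeeping inside the torsion theory $(\mathcal{T}(T), \mathcal{F}(T))$; all the delicate representation-theoretic content is concentrated in exhibiting the slice.
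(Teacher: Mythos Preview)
Your outline matches the paper's architecture: reduce to a sincere module over $B=A/\ann_A(M)$ (the paper quotes \cite[Proposition~2.3]{RSS} for the survival of the no-short-chain property, then splits $B$ into indecomposable factors), establish that $B$ is tilted, locate the summands of $M$ on a section of a connecting component, and read off the injective $I$. You are also right that the hard content is concentrated in the middle.

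However, you have mis-described what that middle step actually requires, and this matters. The paper does \emph{not} build the slice directly from the combinatorics of the $M_j$'s. Instead it separates two tasks. First, the assertion ``sincere $M$ not the middle of a short chain $\Rightarrow$ $B$ tilted'' is quoted wholesale from the companion paper \cite{JMS1} (Theorem~\ref{thm jms}); it is not reproved here. Second, and this is the main technical work of the present paper, once one knows $B=\End_{\overline H}(\overline T)$ is tilted one must still show that every indecomposable summand of $M$ lies in the connecting component $\mathcal{C}_{\overline T}$. This is far from the ``section-like subquiver'' picture you sketch: it requires the full structure theory of tilted algebras (the decomposition $\mathcal{Y}\Gamma_{B_i^{(l)}}\cup\mathcal{C}_T\cup\mathcal{X}\Gamma_{B_j^{(r)}}$, the concealed quotients, Baer's theorem on wild regular components, the ray/coray tube lemmas) to rule out summands of $M$ outside $\mathcal{C}_{\overline T}$; see Propositions~\ref{prop 4.1} and~\ref{prop 4.2}. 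Only then does Proposition~\ref{prop 4.3} construct a section $\Delta$ of $\mathcal{C}_{\overline T}$ containing all the $M_j$, via a delicate inductive reflection argument along $\tau_B$-orbits.

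Your final step is also not pure bookkeeping: the section $\Delta$ so obtained is typically \emph{not} the canonical section $\Delta_{\overline T}$, so the $M_j$ are not yet images of injectives. The paper invokes the Liu--Skowro\'nski criterion (Theorem~\ref{thm 3.3}) to produce a \emph{new} hereditary algebra $H=\End_B(T^*_\Delta)$ and tilting module $T$ for which $\Delta$ becomes the canonical section $\Delta_T$; only relative to this new pair does $M\cong\Hom_H(T,I)$ hold.
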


We note that for  a hereditary algebra $H$, $T$ a tilting module
in $\mod H$, $I$ an injective module in $\mod H$, and
$B=\End_H(T)$, the right $B$-module $\Hom_H(T,I)$ is not the
middle of a short chain in $\mod B$ (see Lemma \ref{lem 3.1}). An
important role in the proof of the main theorem plays the
following characterization of tilted algebras established recently
in the authors paper \cite{JMS1}: an algebra $B$ is a tilted
algebra if and only if $\mod B$ admits a sincere module $M$ which
is not the middle of a short chain.\\

The following fact is a consequence of Theorem \ref{thm 1.1}.
\begin{cor} \label{cor 1.2}
Let $A$ be an algebra and $M$ a module in $\mod A$ which is not
the middle of a short chain. Then $\End_A(M)$ is a hereditary
algebra.
\end{cor}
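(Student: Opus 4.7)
The plan is to feed $M$ into Theorem \ref{thm 1.1} and then trace the chain of isomorphisms $\End_A(M) \cong \End_B(M) \cong \End_H(I)$, concluding by noting that the last algebra is hereditary because $H$ is. Theorem \ref{thm 1.1} supplies a hereditary algebra $H$, a tilting module $T$ in $\mod H$, an injective module $I$ in $\mod H$, and a surjective algebra map $A \to B = \End_H(T)$ with $M \cong \Hom_H(T,I)$ as a right $B$-module. The first isomorphism is immediate: because $M$ is a $B$-module, every $A$-linear endomorphism of $M$ automatically annihilates the kernel of $A \to B$ and is therefore $B$-linear, so $\End_A(M) = \End_B(M)$.

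For the second isomorphism I would invoke the Brenner--Butler tilting theorem. Since $H$ is hereditary and $I$ is injective, $\Ext^1_H(T,I) = 0$, so $I$ lies in the torsion class $\mathcal{T}(T) = \{X \in \mod H : \Ext^1_H(T,X) = 0\}$. The tilting theorem provides an equivalence of categories from $\mathcal{T}(T)$ onto the torsion-free class $\mathcal{Y}(T) \subseteq \mod B$ implemented by the functor $\Hom_H(T,-)$; in particular, hom-sets between objects of $\mathcal{T}(T)$ are preserved. Applied to $I$, this yields $\End_H(I) \cong \End_B(\Hom_H(T,I)) = \End_B(M)$.

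It remains to show that $\End_H(I)$ is hereditary, which is the only mildly substantive step. Since $H$ is hereditary, the Nakayama functor $\nu = D\Hom_H(-,H)$ restricts to an equivalence between the full subcategories of finitely generated projective and finitely generated injective $H$-modules, so $I \cong \nu P$ for some projective $P \in \mod H$ and $\End_H(I) \cong \End_H(P)$. For a projective $P$ over a hereditary artin algebra, $\End_H(P)$ is Morita equivalent to a corner algebra $eHe$ for an appropriate idempotent $e$, and such corner algebras inherit hereditariness (for instance, by realising $eHe$ as the path algebra of a subquiver-with-induced-arrows in the basic case, or equivalently by dualising and viewing $DI$ as a projective over the hereditary algebra $H^{\op}$). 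The preceding reductions are essentially bookkeeping on Theorem \ref{thm 1.1} and classical tilting theory, so this last verification is the only place where I expect any care is needed.
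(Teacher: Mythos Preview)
Your proof is correct and follows essentially the same route as the paper: apply Theorem~\ref{thm 1.1}, use the Brenner--Butler equivalence to obtain the chain $\End_A(M)\cong\End_B(M)\cong\End_H(I)$, and then argue that $\End_H(I)$ is hereditary. The only cosmetic difference is in this last step: the paper simply quotes the known characterization that an algebra $\Lambda$ is hereditary if and only if $\End_\Lambda(I)$ is hereditary for every injective $I$ (citing \cite{KSZ}), whereas you sketch a direct proof of that fact via the Nakayama functor and corner algebras.
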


In Sections 2 and 3, after recalling some background on module
categories  and tilted algebras, we prove preliminary facts
playing an essential role in the proof of Theorem \ref{thm 1.1}.
Section 4 is devoted to the proofs of Theorem \ref{thm 1.1} and
Corollary \ref{cor 1.2}. In the final Section 5 we present
examples illustrating the main
theorem.\\

For background on the representation theory applied here we refer
to \cite{ASS}, \cite{ARS}, \cite{Ri1}, \cite{SS1}, \cite{SS2}.

%%%%%%%%%%%%%%%%%%%%%%%%%%%%%%%%%%%%%%%%%%%%%%%%%%%%%%%%%%%%%%22222222222222222222222222222222222222222222222222222222222222222222222222222222

\section{\normalsize Preliminaries on module categories}

Let $A$ be an algebra. We  denote by $\Gamma_A$ the
Auslander-Reiten quiver of $A$. Recall that $\Gamma_A$ is a valued
translation quiver whose vertices are the isomorphism classes
$\{X\}$ of modules $X$ in $\ind A$, the valued arrows of
$\Gamma_A$ correspond to irreducible homomorphisms between
indecomposable modules (and describe minimal left almost split
homomorphisms with indecomposable domains and minimal right almost
split homomorphisms with indecomposable codomains) and  the
translation is given by the Auslander-Reiten translations
$\tau_A=D\Tr$ and $\tau^-_A=\Tr D$. We shall not distinguish
between a module $X$ in $\ind A$ and the corresponding vertex
$\{X\}$ of $\Gamma_A$. By a component of $\Gamma_A$ we mean a
connected component of the quiver $\Gamma_A$. Following \cite{S2},
a component  $\mathcal{C}$ of $\Gamma_A$ is said to be generalized
standard if $\rad^{\infty}_A(X,Y)=0$
 for all modules $X$ and $Y$ in $\mathcal{C}$, where
 $\rad^{\infty}_A$ is the infinite Jacobson radical of $\mod A$.
 Moreover, two components $\mathcal{C}$ and $\mathcal{D}$ of
 $\Gamma_A$ are said to be orthogonal if $\Hom_A(X,Y)=0$ and
 $\Hom_A(Y,X)=0$ for all modules $X$ in $\mathcal{C}$ and $Y$ in
 $\mathcal{D}$. A family $\mathcal{C}=(\mathcal{C}_i)_{i \in I}$
 of components of $\Gamma_A$ is said to be (strongly) separating
 if the components in $\Gamma_A$ split into three disjoint
 families $\mathcal{P}^A$, $\mathcal{C}^A= \mathcal{C}$ and
 $\mathcal{Q}^A$ such that the following conditions are
 satisfied:
 \begin{itemize}
 \item[(S1)] $ \mathcal{C}^A$ is a sincere family of pairwise orthogonal
 generalized standard components;
 \item[(S2)] $\Hom_A(\mathcal{Q}^A,
 \mathcal{P}^A)=0$, $\Hom_A(\mathcal{Q}^A, \mathcal{C}^A)=0$,
 $\Hom_A(\mathcal{C}^A,
 \mathcal{P}^A)=0$;
 \item [(S3)] any homomorphism from $\mathcal{P}^A$ to $\mathcal{Q}^A$ in
 $\mod A$ factors through $\add (\mathcal{C}_i)$ for any
 $i \in I$.
 \end{itemize}
We then say that $\mathcal{C}^A$ separates $\mathcal{P}^A$ from
$\mathcal{Q}^A$ and write
$$\Gamma_A=\mathcal{P}^A \vee
\mathcal{C}^A \vee \mathcal{Q}^A.$$
 A component $\mathcal{C}$ of
$\Gamma_A$ is said to be preprojective if $\mathcal{C}$ is acyclic
(without oriented cycles) and each module in $\mathcal{C}$ belongs
to the $\tau_A$-orbit of a projective module. Dually,
$\mathcal{C}$ is said to be preinjective if $\mathcal{C}$ is
acyclic and each module in $\mathcal{C}$ belongs to the
$\tau_A$-orbit of an injective module. Further, $\mathcal{C}$ is
called regular if $\mathcal{C}$ contains neither a projective
module nor an injective module. Finally, $\mathcal{C}$ is called
semiregular if $\mathcal{C}$ does not contain both a projective
module and an injective module. By a general result of S. Liu
\cite{L0} and Y. Zhang \cite{Z}, a regular component $\mathcal{C}$
contains an oriented cycle if and only if $\mathcal{C}$ is a
stable tube, that is, an orbit  quiver
$\mathbb{ZA}_{\infty}/(\tau^r)$, for some integer $r \geq 1$.
Important classes of semiregular components with oriented cycles
are formed by the ray tubes, obtained from stable tubes by a
finite number (possibly empty) of ray insertions, and the coray
tubes obtained from stable tubes by a finite number (possibly
empty) of coray insertions (see \cite{Ri1}, \cite{SS2}).

The following characterizations of ray and coray tubes of
Auslander-Reiten quivers of algebras have been established by S.
Liu in \cite{L1a}.
\begin{thm}
Let $A$ be an algebra and $\mathcal{C}$ be a semiregular component
of $\Gamma_A$. The following equivalences hold:
\begin{enumerate}
\renewcommand{\labelenumi}{\rm(\roman{enumi})}
\item  $\mathcal{C}$ contains an oriented cycle but no injective
module if and only if $\mathcal{C}$ is a ray tube;
\item $\mathcal{C}$ contains an oriented cycle but no projective
module if and only if $\mathcal{C}$ is a coray tube.
\end{enumerate}
\end{thm}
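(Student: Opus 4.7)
The plan is to reduce the theorem to part (i), since (ii) follows by the standard duality $D$ applied to $\mod A$: injective modules are sent to projective modules in $\mod A^{\op}$, oriented cycles are reversed, and coray insertions become ray insertions. So I will concentrate on proving the equivalence in (i).

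For the implication \emph{ray tube $\Rightarrow$ cycle but no injective}, I would argue directly from the construction. By definition, a ray tube is obtained from a stable tube $\mathcal{T}=\mathbb{Z}\mathbb{A}_{\infty}/(\tau^r)$ by finitely many ray insertions; each such insertion attaches a new ray consisting of modules forced to be projective (the inserted vertex is a projective whose $\tau_A$-orbit terminates after one step), while no injective module is introduced. The stable tube $\mathcal{T}$ is full of oriented cycles (the $\tau_A$-orbits are cyclic of length $r$), and those cycles survive the insertions, so the resulting ray tube is semiregular, contains oriented cycles, and has no injective modules.

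The substantial direction is \emph{semiregular, oriented cycle, no injective $\Rightarrow$ ray tube}. Here I would proceed as follows. Let $\mathcal{C}$ be such a component. Since $\mathcal{C}$ contains no injective module, for every $X\in\mathcal{C}$ the iterated translates $\tau_A^{-n}X$ are defined and remain in $\mathcal{C}$; in particular $\tau_A^-$ acts without obstruction on $\mathcal{C}$. Denote by $_s\mathcal{C}$ the \emph{left stable part} of $\mathcal{C}$, i.e.\ the full translation subquiver obtained by deleting all projective vertices together with the arrows incident to them. The key step is to show that $_s\mathcal{C}$ is nonempty and, using the hypothesis that $\mathcal{C}$ has an oriented cycle, that it contains an oriented cycle as well (because the cycle can be pushed off the projectives by applying $\tau_A^{-}$ sufficiently many times, which is possible thanks to the absence of injectives). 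Then $_s\mathcal{C}$ is a translation quiver on which both $\tau_A$ and $\tau_A^-$ are everywhere defined, so it is a connected regular translation subquiver containing an oriented cycle; by the Liu--Zhang theorem quoted in the excerpt, each of its connected components is a stable tube, and by connectedness of $\mathcal{C}$ there is only one, say $\mathcal{T}=\mathbb{Z}\mathbb{A}_{\infty}/(\tau^r)$.

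It then remains to reconstruct $\mathcal{C}$ from $\mathcal{T}$ by ray insertions. The main obstacle, and where Liu's degree-of-irreducible-maps machinery from \cite{L1a} is indispensable, is to analyze the shapes of the neighborhoods of the projective vertices $P$ in $\mathcal{C}$: one has to show that every such $P$ sits at the foot of an infinite sectional path (a ray) in $\mathcal{C}$, that these rays attach to $\mathcal{T}$ exactly along the rays of the stable tube, and that only finitely many such insertions occur. Concretely, for each projective $P\in\mathcal{C}$ one uses the almost split sequence ending at $\tau_A^{-}P$ (which exists because no injective is present) together with bounds on the left and right degrees of the irreducible maps around $P$ to identify the local picture as a ray insertion. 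Once every projective has been accounted for in this way, the description of $\mathcal{C}$ as a finite sequence of ray insertions starting from $\mathcal{T}$ is complete, which is exactly the definition of a ray tube.
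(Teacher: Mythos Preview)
The paper does not prove this theorem at all: it is stated as a result ``established by S.~Liu in \cite{L1a}'' and quoted without proof. So there is no argument in the paper to compare your proposal against.

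As a sketch of Liu's approach your outline is in the right spirit, but two points would need tightening if you actually wanted to carry it out. First, the Liu--Zhang theorem quoted in the paper applies to \emph{regular components} of $\Gamma_A$, not to arbitrary translation subquivers; the left stable part $_s\mathcal{C}$ you form is not a component of $\Gamma_A$, so you cannot invoke that theorem directly --- one has to work instead with the abstract combinatorics of stable translation quivers (Happel--Preiser--Ringel, Zhang) to identify $_s\mathcal{C}$ as a tube. Second, the phrase ``the cycle can be pushed off the projectives by applying $\tau_A^-$'' is imprecise: a cycle that passes through a projective vertex $P$ cannot simply be translated, since $\tau_A P$ is undefined; Liu's actual argument uses the degree machinery from \cite{L0} to control the sectional paths ending at projectives and to show that the projective vertices sit on rays attached to the tube, rather than moving cycles around. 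These are exactly the places where you yourself flag that \cite{L1a} is ``indispensable'', so your proposal is really a summary of what needs to be done rather than an independent proof.
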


The following lemma from \cite[Lemma 1.2]{JMS1} will play an
important role in the proof of our main theorem.

\begin{lem}\label{lem 2.2}
Let $A$ be an algebra and $M$ a sincere module in $\mod A$ which
is not the middle of a short chain. Then the following statements
hold:
\begin{enumerate}
\renewcommand{\labelenumi}{\rm(\roman{enumi})}
\item $\Hom_A(M,X)=0$ for any $A$-module $X$ in $\mathcal{T}$, where $\mathcal{T}$ is an arbitrary ray tube of $\Gamma_A$ containing a projective
module;
\item $\Hom_A(X,M)=0$ for any $A$-module $X$ in $\mathcal{T}$, where $\mathcal{T}$ is an arbitrary coray tube of $\Gamma_A$ containing an injective module.
\end{enumerate}
\end{lem}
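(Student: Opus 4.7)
Both parts of Lemma~\ref{lem 2.2} are proved by contradiction and are mutually dual, so I focus on (i); part (ii) will follow by the symmetric argument. Suppose, toward a contradiction, that $\Hom_A(M,Y_0)\neq 0$ for some indecomposable $Y_0$ in a ray tube $\mathcal{T}$ of $\Gamma_A$ containing a projective module $P$. The hypothesis on $M$ is equivalent to: for every indecomposable $X$ in $\mod A$, either $\Hom_A(X,M)=0$ or $\Hom_A(M,\tau_A X)=0$. The goal is to produce an indecomposable $Z\in\mathcal{T}$ with both $\Hom_A(Z,M)\neq 0$ and $\Hom_A(M,\tau_A Z)\neq 0$, which then yields a short chain $Z\to M\to\tau_A Z$ with middle $M$ and contradicts the hypothesis.

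Two initial data are immediate. First, since $M$ is sincere and the simple top $P/\rad P$ occurs as a composition factor of $M$, we have $\Hom_A(P,M)\neq 0$, so $P$ lies in
\[
\mathcal{S}_1 := \{Z\in\mathcal{T} \text{ indecomposable} \mid \Hom_A(Z,M)\neq 0\}.
\]
Second, by Theorem~2.1 a ray tube contains no injective module, so $\tau_A^{-1}X$ is a well-defined indecomposable in $\mathcal{T}$ for every indecomposable $X\in\mathcal{T}$. Hence $W:=\tau_A^{-1}Y_0\in\mathcal{T}$ and $\Hom_A(M,\tau_A W)=\Hom_A(M,Y_0)\neq 0$, so $W$ lies in
\[
\mathcal{S}_2 := \{Z\in\mathcal{T} \text{ indecomposable} \mid \Hom_A(M,\tau_A Z)\neq 0\}.
\]
The hypothesis on $M$ is precisely the disjointness $\mathcal{S}_1\cap\mathcal{S}_2=\emptyset$.

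The crux of the proof is the claim that $\mathcal{S}_1$ and $\mathcal{S}_2$ cannot be simultaneously nonempty and disjoint inside $\mathcal{T}$. To establish this I would use the explicit structure of $\mathcal{T}$ as a stable tube $\mathbb{Z}A_\infty/(\tau^r)$ modified by finitely many ray insertions, together with the combinatorial description of non-zero $\Hom$-spaces inside $\mathcal{T}$ via paths of irreducible morphisms. Starting from $P\in\mathcal{S}_1$, one propagates membership in $\mathcal{S}_1$ up the ray through $P$ by composing with irreducible monomorphisms, and around the mouth of the tube using the $\tau_A^r$-periodicity of its stable part; starting from $W\in\mathcal{S}_2$, one propagates $\mathcal{S}_2$ by iterated $\tau_A^{-1}$ translates. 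The $\tau_A^r$-periodicity of the stable part of $\mathcal{T}$ together with the finiteness of the ray insertions then forces a common indecomposable $Z\in\mathcal{S}_1\cap\mathcal{S}_2$, producing the required contradiction. This combinatorial propagation is the main obstacle, since the memberships must be tracked carefully enough through the projective insertions that the two ``sides'' meet at a single indecomposable.

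For part (ii), the dual argument applies: in a coray tube $\mathcal{T}$ containing an injective $I$, sincerity of $M$ yields $\Hom_A(M,I)\neq 0$ via the simple socle of $I$, and the absence of projectives in $\mathcal{T}$ ensures $\tau_A Z\in\mathcal{T}$ for every indecomposable $Z\in\mathcal{T}$; the symmetric combinatorial argument then applies with corays replacing rays.
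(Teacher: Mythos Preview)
The paper does not prove Lemma~\ref{lem 2.2}; it is quoted verbatim from \cite[Lemma~1.2]{JMS1}. So there is no in-paper argument to compare against, and your sketch must be judged on its own.

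Your global strategy is the right one and is almost certainly the one in \cite{JMS1}: produce an indecomposable $Z$ in $\mathcal{T}$ lying in $\mathcal{S}_1\cap\mathcal{S}_2$. The set-up (sincerity gives $P\in\mathcal{S}_1$; absence of injectives gives $W=\tau_A^{-1}Y_0\in\mathcal{S}_2$) is fine. The gap is that both of your propagation rules are stated in the wrong direction, so the argument as written cannot close.

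For $\mathcal{S}_1$: composing with irreducible monomorphisms along the ray \emph{starting} at $P$ produces maps $P\hookrightarrow Z$, which says nothing about $\Hom_A(Z,M)$. What you need instead is a monomorphism \emph{into} a power of $M$. Here sincerity alone is not enough; you must use that a sincere module which is not the middle of a short chain is faithful \cite[Corollary~3.2]{RSS}, so $P\hookrightarrow M^r$ for some $r$, and hence any indecomposable summand $L$ of $\rad P$ lying in $\mathcal{T}$ embeds in $M^r$. Now take the infinite sectional path in $\mathcal{T}$ \emph{ending} at $L$ (a coray), say $\cdots\to L_2\to L_1\to L_0=L$. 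By Lemma~\ref{lem 2.5} each composite $L_n\to L$ is nonzero, and composing with the monomorphism $L\hookrightarrow M^r$ gives $L_n\in\mathcal{S}_1$ for all $n$.

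For $\mathcal{S}_2$: iterating $\tau_A^{-1}$ does not propagate membership, since $\Hom_A(M,Y_0)\neq 0$ gives no information about $\Hom_A(M,\tau_A^{-1}Y_0)$. What works is to go up the ray through $Y_0$: each irreducible map along this ray is a monomorphism, so $\Hom_A(M,V)\neq 0$ for every $V$ on the ray above $Y_0$, i.e.\ $\tau_A^{-1}V\in\mathcal{S}_2$ for all such $V$.

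It then remains to check that, in the ray tube $\mathcal{T}$, the ray issuing from $Y_0$ meets the $\tau_A$-translate of the coray ending at $L$; equivalently, that some $L_n$ lies on the ray $\{\tau_A^{-1}V\}$. This is the genuinely combinatorial step, and it follows from the periodicity of the stable part of $\mathcal{T}$ together with the finiteness of the ray insertions (in a stable tube of rank $r$ any ray and any coray intersect in every quasi-length large enough). With the propagations corrected as above, the argument goes through; with the directions you wrote, neither set grows and no contradiction is reached.
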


\begin{lem} \label{lem 2.3}
Let $A$ be an algebra, $\mathcal{C}=(\mathcal{C}_i)_{i \in I}$ a
separating family of stable tubes of $\Gamma_A$, and $\Gamma_A =
\mathcal{P}^A \vee \mathcal{C}^A \vee \mathcal{Q}^A$ the
associated decomposition of $\Gamma_A$ with
$\mathcal{C}^A=\mathcal{C}$. Then for arbitrary modules $M \in
\mathcal{P}^A$, $N \in \mathcal{Q}^A$, and $i \in I$, the
following statements hold:
\begin{enumerate}
\renewcommand{\labelenumi}{\rm(\roman{enumi})}
\item $\Hom_A(M,X) \neq 0$ for all but finitely many modules $X \in
\mathcal{C}_i$;
\item $\Hom_A(X, N) \neq 0$ for all but finitely many modules $X \in
\mathcal{C}_i$.
\end{enumerate}
\end{lem}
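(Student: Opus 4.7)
My plan is to prove (i) by analysing the dimension $\dim_R \Hom_A(M, X)$ as $X$ ranges over the stable tube $\mathcal{C}_i$, using the almost split sequences inside the tube; part (ii) will then follow by a dual argument. Since both statements are additive, I may assume $M$ and $N$ are indecomposable.

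The starting point is the vanishing $\Ext^1_A(M, X) = 0$ for every $X \in \mathcal{C}_i$. By the Auslander-Reiten formula, this Ext group vanishes whenever $\Hom_A(\tau_A^{-1}X, M) = 0$; and indeed $\tau_A^{-1}X \in \mathcal{C}_i \subseteq \mathcal{C}^A$ (stable tubes are $\tau_A$-stable) while $M \in \mathcal{P}^A$, so condition (S2) provides the required vanishing (the case $M$ projective being trivial). Let $r$ be the rank of $\mathcal{C}_i$, let $E_1, \ldots, E_r$ be the quasi-simple modules on its mouth cyclically labelled so that $\tau_A E_{j+1} \cong E_j$, and let $E_j[n]$ denote the indecomposable module of quasi-length $n$ on the ray through $E_j$; set $h(j, n) := \dim_R \Hom_A(M, E_j[n])$. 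The short exact sequence $0 \to E_j[n-1] \to E_j[n] \to E_{j+n-1} \to 0$, combined with the Ext-vanishing, yields a short exact sequence after applying $\Hom_A(M, -)$, so
$$h(j, n) \;=\; h(j, n-1) + h(j + n - 1, 1),$$
and hence by induction
$$h(j, n) \;=\; \sum_{k=0}^{n-1} h(j + k, 1),$$
with all indices read modulo $r$.

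Once some $h(j_0, 1) > 0$ is known, this closed form forces $h(j, n) \geq h(j_0, 1) > 0$ as soon as $n \geq r$, since the window $\{j, j+1, \ldots, j+n-1\} \bmod r$ then covers every residue class. Consequently the modules $X \in \mathcal{C}_i$ with $\Hom_A(M, X) = 0$ lie in the finite set $\{E_j[n] : 1 \leq j \leq r,\, 1 \leq n \leq r-1\}$, which is exactly the conclusion of (i). So the whole problem reduces to producing at least one index $j_0$ with $h(j_0, 1) > 0$.

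This existence step is the main obstacle, and it is here that the sincerity of $\mathcal{C}^A$ together with (S3) plays the decisive role. If $h(j, 1) = 0$ for every $j$, the displayed formula forces $\Hom_A(M, \mathcal{C}_i) = 0$; then (S3) makes every homomorphism $M \to N'$ with $N' \in \mathcal{Q}^A$ factor through $\add \mathcal{C}_i$ and thus vanish, so $\Hom_A(M, \mathcal{Q}^A) = 0$ as well. Together with $\Hom_A(\mathcal{C}^A \cup \mathcal{Q}^A, M) = 0$ from (S2), this would isolate $M$ from both outer families; the sincerity of $\mathcal{C}^A$ applied to the composition factors of the injective envelope of $M$ contradicts such isolation and forces some $h(j_0, 1) > 0$. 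Part (ii) follows by the mirror argument: replace $\Hom_A(M, -)$ by $\Hom_A(-, N)$, observe that $\Ext^1_A(X, N) = 0$ via the Auslander-Reiten formula and $\Hom_A(\mathcal{Q}^A, \mathcal{C}^A) = 0$ from (S2), derive the analogous recursion $\dim \Hom_A(E_j[n], N) = \sum_{k=0}^{n-1} \dim \Hom_A(E_{j+k}, N)$, and dualize the existence step.
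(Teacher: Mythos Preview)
Your proof is correct, and the overall shape---existence of one nonzero $\Hom$ via the injective envelope and (S3), then propagation to all modules of large quasi-length---matches the paper's. The difference lies in how the propagation is carried out. The paper simply quotes \cite[Lemma~3.9]{S2a} to pass from a single $U\in\mathcal{C}_i$ with $\Hom_A(M,U)\neq 0$ to $\Hom_A(M,X)\neq 0$ for all $X$ of quasi-length at least $r_i$. You instead give a self-contained argument: the Auslander--Reiten formula together with (S2) kills $\Ext^1_A(M,-)$ on the tube, and then the quasi-composition series $0\to E_j[n-1]\to E_j[n]\to E_{j+n-1}\to 0$ yields the closed formula $h(j,n)=\sum_{k=0}^{n-1}h(j+k,1)$, from which the conclusion is immediate once $n\geq r$. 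This is more elementary and avoids the external reference; the paper's version is terser but less transparent.

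One remark on presentation: your existence paragraph is correct but compressed. The cleanest way to finish it is to observe that any indecomposable injective $I_S$ with $\mathrm{soc}(I_S)=S$ a composition factor of $M$ must lie in $\mathcal{Q}^A$: it is not in $\mathcal{C}^A$ since stable tubes contain no injectives, and sincerity of $\mathcal{C}^A$ produces some $X\in\mathcal{C}^A$ with $\Hom_A(X,I_S)\neq 0$, which by (S2) rules out $I_S\in\mathcal{P}^A$. Then $\Hom_A(M,I_S)\neq 0$ contradicts $\Hom_A(M,\mathcal{Q}^A)=0$. This is what your final sentence is pointing at, and making it explicit would remove the vagueness around ``isolation.'' (The clause $\Hom_A(\mathcal{C}^A\cup\mathcal{Q}^A,M)=0$ is not actually needed for the contradiction.)
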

\begin{proof}
Let $M$ be a module in $\mathcal{P}^A$, $N$ a module in
$\mathcal{Q}^A$, $i \in I$, and $r_i$ be the rank of the stable
tube $\mathcal{C}_i$. Consider an injective hull $M \rightarrow
E_A(M)$ of $M$ in $\mod A$ and a projective cover
$P_A(N)\rightarrow N$ of $N$ in $\mod A$. Applying the separating
property of $\mathcal{C}$, we conclude that there exist
indecomposable modules $U$ and $V$ in $\mathcal{C}_i$ such that
$\Hom_A(M,U) \neq 0$ and $\Hom_A(V,N) \neq 0$. Then $\Hom_A(M,X)
\neq 0$ and $\Hom_A(X,N)\neq 0$ for all indecomposable modules $X$
in $\mathcal{C}_i$ of quasi-length greater than or equal to $r_i$,
by \cite[Lemma 3.9]{S2a}. Since such modules $X$ exhaust all but
finitely many modules in $\mathcal{C}_i$, the claims (i) and (ii)
hold.
\end{proof}

We also have the following known fact.
\begin{lem} \label{lem 2.4}
Let $A$ be an algebra and $\mathcal{T}$ a stable tube of
$\Gamma_A$. Then every indecomposable module $X$ in $\mathcal{T}$
is the middle of a short chain in $\mod A$.
\end{lem}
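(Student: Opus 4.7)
The plan is, for each indecomposable $X$ in $\mathcal{T}$, to exhibit an explicit indecomposable module $W$ of $\mathcal{T}$ together with nonzero morphisms $W \to X$ and $X \to \tau_A W$, yielding a short chain $W \to X \to \tau_A W$ with $X$ as its middle.

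By the Liu--Zhang classification of stable translation quivers cited just before the lemma, $\mathcal{T}$ is isomorphic to $\mathbb{Z}\mathbb{A}_\infty/(\tau^r)$ for some integer $r \geq 1$, and its indecomposable modules can be parameterized as $M[i,j]$ with $i \in \mathbb{Z}/r\mathbb{Z}$ (the quasi-socle label) and $j \geq 1$ (the quasi-length), satisfying $\tau_A M[i,j] = M[i-1,j]$. Each $M[i,j]$ is quasi-uniserial with submodule chain $M[i,1] \subset M[i,2] \subset \cdots \subset M[i,j]$, and the quotient $M[i,j]/M[i,j-m]$ is isomorphic to $M[i+j-m,m]$, with indices reduced modulo $r$. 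Given $X = M[i,j]$, the construction takes $W = M[i+j,r]$, so $\tau_A W = M[i+j-1,r]$, with the crucial observation that the mouth module $M[i+j-1,1]$ is simultaneously the quasi-top of $X$ and the quasi-socle of $\tau_A W$.

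It then remains to construct the two nonzero morphisms. For $\Hom_A(W, X)$, I would pick $m$ with $1 \leq m \leq \min(j,r)$ and $m \equiv j \pmod{r}$; such $m$ always exists in this range. The quotient $W/M[i+j,r-m] \cong M[i+j-m,m] \cong M[i,m]$ of $W$ is a submodule of $X$, so the composition $W \twoheadrightarrow M[i,m] \hookrightarrow X$ is nonzero. For $\Hom_A(X, \tau_A W)$, I would take $m = 1$: the quasi-top projection $X \twoheadrightarrow M[i+j-1,1]$ composed with the quasi-socle inclusion $M[i+j-1,1] \hookrightarrow \tau_A W$ is nonzero. Combining these produces the required short chain.

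The main technical obstacle is the bookkeeping of quasi-socle indices modulo $r$ when matching a quotient of $W$ with a submodule of $X$; once the quasi-uniserial structure of indecomposables in a stable tube is invoked, the computation is routine. Note that in the rank-one homogeneous case $\tau_A W = W$ and the short chain degenerates to $W \to X \to W$, to which the same construction still applies.
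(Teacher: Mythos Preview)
The paper does not actually prove this lemma; it is recorded there only as a ``known fact'', so there is no argument to compare yours against. Your proof is correct. The quasi-uniserial structure you invoke---ray maps $M[i,k]\to M[i,k+1]$ mono, coray maps epi, and subquotients $M[i,j]/M[i,k]\cong M[i+k,j-k]$---is standard for stable tubes over any artin algebra: it follows from the almost split sequences in the tube together with the length identity $\ell(M[i,j])=\sum_{k=0}^{j-1}\ell(M[i+k,1])$, which forces every irreducible ray map to increase length and hence to be a monomorphism (dually for coray maps). With that in hand your choice $W=M[i+j,r]$ and the modular index bookkeeping go through exactly as written; the intermediate expression $M[i+j-m,m]$ is already the mod-$r$ reduction of $M[(i+j)+r-m,m]$, and the conclusion $\cong M[i,m]$ is correct.

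A marginally lighter variant, using only Lemma~\ref{lem 2.5} and bypassing the subquotient description entirely, is to take $W=M[i-r+1,\,j+r-1]$. Then there is a sectional coray path $W\to\cdots\to X$ of length $r-1$, while $\tau_A W=M[i-r,\,j+r-1]=M[i,\,j+r-1]$, so there is a sectional ray path $X\to\cdots\to\tau_A W$ of the same length; the Bautista--Smal{\o} lemma then gives both nonzero compositions directly.
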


A path $\xymatrix@C=13pt{X_0 \ar[r] & X_1 \ar[r] & ... \ar[r]
&X_{t-1} \ar[r] & X_t}$ in the Auslander-Reiten quiver $\Gamma_A$
of an algebra $A$ is called sectional if $\tau_AX_i \ncong
X_{i-2}$ for all $i \in \{2,...,t\}$. Then we have the following
result proved by R. Bautista and S. O. Smal\o \, \cite{BS}.
\begin{lem} \label{lem 2.5}
Let $A$ be an algebra and
\[\xymatrix@C=13pt{X_0 \ar[r]^{f_1} & X_1
\ar[r]^{f_2} & ... \ar[r] &X_{t-1} \ar[r]^{f_t} & X_t}\] be a path
of irreducible homomorphisms $f_1,f_2,..., f_t$ corresponding to a
sectional path of $\Gamma_A$. Then $f_t...f_2f_1 \neq 0$.
\end{lem}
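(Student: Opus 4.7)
The plan is to proceed by induction on the length $t$ of the path. The base case $t = 1$ is immediate: an irreducible homomorphism between indecomposable modules is, by definition, a nonzero element of $\Hom_A(X_0,X_1)$ (it lies in the radical of $\Hom_A(X_0,X_1)$ but not in its square).

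For the inductive step, assume the conclusion for every sectional path of length strictly less than $t$, and suppose for contradiction that $f_t f_{t-1}\cdots f_1 = 0$. Applied to the sectional subpath $X_0\to\cdots\to X_{t-1}$, the inductive hypothesis gives that $g := f_{t-1}\cdots f_1$ is nonzero, while $f_t\circ g = 0$. The main lever is the almost split sequence ending at $X_t$,
\[0\longrightarrow \tau_A X_t \xrightarrow{\ \alpha\ } E \xrightarrow{\ \beta\ } X_t \longrightarrow 0.\]
Since $f_t$ is irreducible and $X_{t-1}$ is indecomposable, $X_{t-1}$ occurs as a direct summand of $E$, and the corresponding component of $\alpha$ supplies an irreducible homomorphism $p\colon \tau_A X_t\to X_{t-1}$. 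Crucially, the sectional hypothesis $\tau_A X_t\ncong X_{t-2}$ guarantees that $p$ is \emph{not} isomorphic (at the source) to $f_{t-1}\colon X_{t-2}\to X_{t-1}$. The vanishing $f_t\circ g = 0$ lifts $g$ through $\alpha$ to a homomorphism $h\colon X_0 \to \tau_A X_t$ whose composition with the $X_{t-1}$-component of $\alpha$ recovers $g$ (modulo contributions along the remaining summands of $E$, handled by the standard matrix decomposition of $\alpha$). Iterating this lifting back along the path, and using $\tau_A X_i \ncong X_{i-2}$ at each stage to ensure that the newly introduced irreducible map does not collide with any $f_{i-1}$, one eventually produces a strictly shorter sectional path with vanishing composition, contradicting the inductive hypothesis.

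The main obstacle will be making the bookkeeping rigorous: one must identify at each stage exactly which summand of the middle term of the relevant almost split sequence absorbs the factorization, verify using $\tau_A X_i\ncong X_{i-2}$ that the resulting subpath is still sectional, and propagate the vanishing consistently through the chain. A clean implementation, following Bautista--Smal\o\ \cite{BS}, organizes this as a nested induction on the position $i$ along the path that tracks the factorization through the successive mesh relations in the subquiver of $\Gamma_A$ spanned by the $X_i$'s and their $\tau_A$-translates, where the sectional condition is precisely what rules out the degenerate cancellations that would otherwise permit the composition to vanish.
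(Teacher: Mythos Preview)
The paper does not supply a proof of this lemma; it merely records the statement and attributes it to Bautista and Smal\o\ \cite{BS}. There is therefore nothing in the paper to compare your argument against.

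Regarding the sketch itself: the opening move is sound. One inducts on $t$; when $f_t g = 0$ with $g = f_{t-1}\cdots f_1 \neq 0$, one views $(g,0)\colon X_0 \to X_{t-1}\oplus E' = E$, notes that its composite with the sink map $\beta$ vanishes, and factors through $\alpha$ to obtain $h\colon X_0\to\tau_A X_t$ with $p h = g$, where $p\colon \tau_A X_t \to X_{t-1}$ is the irreducible component of $\alpha$. (You should also treat the case where $X_t$ is projective separately: then $f_t$ is a monomorphism and $f_t g = 0$ already forces $g = 0$.) However, the clause ``Iterating this lifting back along the path \ldots\ one eventually produces a strictly shorter sectional path with vanishing composition'' is precisely where the substance lies, and you have not supplied it. The map $h$ is not a composite of irreducible maps along a path of $\Gamma_A$, so it cannot be fed into the inductive hypothesis; and you have not identified what the promised shorter sectional path with vanishing composite actually is, nor why it remains sectional. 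The argument in \cite{BS} (see also \cite{ARS}) runs a more careful induction that tracks factorizations through successive almost split sequences and uses the conditions $\tau_A X_i \ncong X_{i-2}$ at each stage to separate the relevant summands of the middle terms; simply naming the mesh relations does not close the induction. As written, your proposal is a reasonable indication of where the proof begins, but the decisive step is missing.
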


Let $A$ be an algebra, $\mathcal{C}$ a component of $\Gamma_A$ and
$V$, $W$ be $A$-modules in $\mathcal{C}$ such that $V$ is a
predecessor of $W$ (respectively, a successor of $W$). If $V$ lies
on a sectional path from $V$ to $W$ (respectively, from $W$ to
$V$), then we say that $V$ is a sectional predecessor of $W$
(respectively, a sectional successor of $W$). Otherwise, we say
that $V$ is a nonsectional predecessor of $W$ (respectively, a
nonsectional successor of $W$). Moreover, denote by
$\mathcal{S}_W$ the set of all indecomposable modules $X$ in
$\mathcal{C}$ such that there is a sectional path in $\mathcal{C}$
(possibly of length zero) from $X$ to $W$, and by
$\mathcal{S}^*_W$ the set of all indecomposable modules $Y$ in
$\mathcal{C}$ such that there is a sectional path in $\mathcal{C}$
(possibly of length zero) from $W$ to $Y$.

\begin{prop}\label{prop 2.6}
Let $A$ be an algebra and $\mathcal{C}$ be an acyclic component of
$\Gamma_A$ with finitely many $\tau_A$-orbits. Then the following
statements hold:
\begin{enumerate}
\renewcommand{\labelenumi}{\rm(\roman{enumi})}
\item if $V$ and $W$ are modules in $\mathcal{C}$ such that $V$ is a predecessor of $W$, $V$ does not belong to $\mathcal{S}_W$, and $W$ has no injective
nonsectional predecessors in $\mathcal{C}$, then we have
$\Hom_A(V,\tau_AU)\neq 0$ for some module $U$ in $\mathcal{S}_W$;
\item if $V$ and $W$ are modules in $\mathcal{C}$ such that $V$ is a successor of $W$, $V$ does not belong to $\mathcal{S}^*_W$, and $W$ has no projective
nonsectional successors in $\mathcal{C}$, then we have
$\Hom_A(\tau^-_AU,V)\neq 0$ for some module $U$ in
$\mathcal{S}^*_W$.
\end{enumerate}
\end{prop}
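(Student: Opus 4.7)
The plan is to prove (i) by induction on the length $t$ of a shortest path $V = X_0 \to X_1 \to \cdots \to X_t = W$ of irreducible morphisms in $\mathcal{C}$, and obtain (ii) by the symmetric dual argument (reversing arrows and interchanging the roles of $\tau_A$ with $\tau^-_A$, $\mathcal{S}_W$ with $\mathcal{S}^*_W$, and the injective with the projective hypothesis). Since $V \notin \mathcal{S}_W$ rules out $t = 0$ and $t = 1$, we have $t \geq 2$, and any such minimal path is automatically non-sectional. Fix one such path and let $j$ be the smallest index with $j \geq 2$ and $\tau_A X_j \cong X_{j-2}$.

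By the minimality of $j$, the initial segment $V = X_0 \to X_1 \to \cdots \to X_{j-2}$ is sectional, so Lemma \ref{lem 2.5} (Bautista--Smal\o) guarantees a nonzero composition $V \to X_{j-2}$. Since $X_{j-2} \cong \tau_A X_j$, this gives $\Hom_A(V, \tau_A X_j) \neq 0$. If already $X_j \in \mathcal{S}_W$, take $U = X_j$ and stop. Otherwise $X_j$ is a non-sectional predecessor of $W$ admitting the tail path $X_j \to \cdots \to X_t = W$ of length $t - j < t$; applying the inductive hypothesis to $X_j$ (with the same $W$ and the same condition on injective non-sectional predecessors, which is a property of $W$) yields a module $U \in \mathcal{S}_W$ with $\Hom_A(X_j, \tau_A U) \neq 0$.

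The main obstacle is the final combining step: extracting $\Hom_A(V, \tau_A U) \neq 0$ from the two relations produced above. My plan is to invoke the Auslander--Reiten duality $\Hom_A(Y, \tau_A X) \cong D\,\overline{\Hom}_A(X, Y)$, where $\overline{\Hom}_A$ denotes homomorphisms modulo those factoring through injective modules, in order to translate the two relations into $\overline{\Hom}_A(X_j, V) \neq 0$ and $\overline{\Hom}_A(U, X_j) \neq 0$. Composition in the stable category then produces a candidate element of $\overline{\Hom}_A(U, V)$, whose nonvanishing is equivalent, again by Auslander--Reiten duality, to the desired $\Hom_A(V, \tau_A U) \neq 0$. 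The technical heart of the proof is to show that this composition really is nonzero in $\overline{\Hom}_A$; here the hypothesis that $W$ has no injective non-sectional predecessor in $\mathcal{C}$ becomes decisive, since it forces any injective module appearing on a path from $V$ to $W$ to lie in $\mathcal{S}_W$, and combined with the acyclicity of $\mathcal{C}$ and its finitely many $\tau_A$-orbits it allows one to rule out an injective factorisation of the composed morphism, completing the induction.
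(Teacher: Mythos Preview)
Your induction is set up cleanly, and the reduction to the case $X_j\in\mathcal{S}_W$ via Lemma~\ref{lem 2.5} is fine. The gap is exactly where you locate it: the combining step. From $\Hom_A(V,\tau_AX_j)\neq 0$ and $\Hom_A(X_j,\tau_AU)\neq 0$ there is no mechanism for producing a nonzero element of $\Hom_A(V,\tau_AU)$. First, the Auslander--Reiten formula you quote is not correct as stated: the duality $D\overline{\Hom}_A(Y,\tau_AX)\cong\Ext^1_A(X,Y)$ involves $\Ext^1$, not $\Hom$, so you cannot directly translate $\Hom_A(V,\tau_AX_j)\neq 0$ into a stable $\overline{\Hom}$ statement. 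Second, and more fundamentally, even if you had two nonzero morphisms $U\to X_j$ and $X_j\to V$ in the injectively stable category, their composite could perfectly well be zero; the vanishing of a composite has nothing to do with either factor individually factoring through an injective, so the hypothesis on injective nonsectional predecessors of $W$ does not help you here. You have correctly identified the difficulty but not resolved it, and I do not see a way to repair this step along the lines you suggest.

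The paper avoids this composition problem altogether by proving a different intermediate statement: by induction on $n(V)$ one shows that \emph{every} path in $\mathcal{C}$ starting at $V$ and of sufficiently large length must pass through a module of $\tau_A\mathcal{S}_W$. This is a purely combinatorial claim about the quiver (using that $V$ is not injective, that its direct successors coincide with the direct predecessors of $\tau_A^-V$, and that $n$ drops for those successors). Once it is in hand, one takes a nonzero map $V\to I$ to an indecomposable injective and factors it through a long chain of irreducible maps as in \cite[IV.5.1]{ASS}; the chain is forced to hit some $\tau_AU$ with $U\in\mathcal{S}_W$, and since the full composite to $I$ is nonzero, the initial segment $V\to\cdots\to\tau_AU$ is already nonzero. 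The point is that this yields a \emph{single} nonzero map $V\to\tau_AU$ directly, rather than trying to splice two separate nonzero maps whose composite you cannot control.
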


\begin{proof}
We shall prove only (i), because the proof of (ii) is dual. Let
$V$ and $W$ be modules in $\mathcal{C}$ such that $V$ is a
predecessor of $W$, $V$ does not belong to $\mathcal{S}_W$, and
$W$ has no injective nonsectional predecessors in $\mathcal{C}$.
Moreover, let $n(V)$ be the length of the shortest path in
$\mathcal{C}$ from $V$ to $W$. We prove first by induction on
$n(V)$ that then every path in $\mathcal{C}$ of sufficiently large
length starting at $V$ is passing through a module in
$\tau_A\mathcal{S}_W$.

We may assume that $V$ does not belong to $\tau_A\mathcal{S}_W$
and hence $n(V)\geq 3$. Because $W$ has no injective nonsectional
predecessors in $\mathcal{C}$ and $\mathcal{S}_W$ does not contain
the module $V$, we conclude that there exists $\tau_A^-V$ and it
is a predecessor of $W$ in $\mathcal{C}$. Moreover,
$n(\tau_A^-V)=n(V)-2$. Indeed, if it is not the case, then we get
a contradiction with the minimality of $n(V)$. Let $\{U_1, U_2,
\ldots, U_t\}$ be the set of all direct predecessors of
$\tau_A^-V$ in $\mathcal{C}$. Then, for any $i\in\{1,\ldots,t\}$,
$U_i$ is a predecessor of $W$ in $\mathcal{C}$ and
$n(U_i)=n(V)-1$. Hence, by the induction hypothesis, every path of
sufficiently large length starting at $U_i$ is passing through  a
module in  $\tau_A\mathcal{S}_W$. Since $\{U_1, U_2, \ldots,
U_t\}$ is also the set of all direct successors of $V$, we have
that every path in $\mathcal{C}$ of nonzero length starting at $V$
is passing through $U_i$ for some $i\in\{1,\ldots,t\}$. Therefore,
the required property holds.

Let now $u: V\to E_A(V)$ be an injective hull of $V$ in $\mod A$.
Then there exists an indecomposable injective $A$-module $I$ such
that $\Hom_A(V,I)\neq 0$. Since $W$ has no injective nonsectional
predecessors in $\mathcal{C}$, applying \cite[Chapter IV, Lemma
5.1]{ASS}, we conclude that there exists a path of irreducible
homomorphisms
$$\xymatrix{V=V_0 \ar[r]^(.55){g_1}& V_1 \ar[r]^(.45){g_2} & V_2 \ar[r]^(.45){}&\cdots\ar[r]^(.45){}& V_{r-1} \ar[r]^(.5){g_{r}} & V_r} $$
with $V_r=\tau_AU$ for some $U\in\mathcal{S}_W$ and a homomorphism
$h_r: V_r\to I$ such that $h_rg_r\ldots g_1\neq 0$. Hence, we
conclude that $\Hom_A(V,\tau_AU)=\Hom_A(V,V_r)\neq 0$.
\end{proof}

%%%%%%%%%%%%%%%%%%%%%%%%%%%%%%%%%%%%%%%%%%%%%%%333333333333333333333333333333333333333333333333333333333333333333333333333333333333333333333333333333333

\section{\normalsize Preliminaries on tilted algebras}

Let $H$ be an indecomposable hereditary algebra and $Q_H$ the
valued quiver of $H$. Recall that the vertices of $Q_H$ are the
numbers $1, 2, \ldots, n$ corresponding to a complete set $S_1,
S_2, \ldots, S_n$ of pairwise nonisomorphic simple modules in
$\mod H$ and there is an arrow from $i$ to $j$ in $Q_H$ if
$\Ext^1_H(S_i,S_j)\neq 0$, and then to this arrow is assigned the
valuation
$(\dim_{\End_H(S_j)}\Ext^1_H(S_i,S_j),\dim_{\End_H(S_i)}\Ext^1_H(S_i,S_j))$.
Recall that the Auslander-Reiten quiver $\Gamma_H$ of $H$ has a
disjoint union decomposition of the form
\[\Gamma_H = \mathcal{P}(H) \vee \mathcal{R}(H) \vee \mathcal{Q}(H),\]
where $\mathcal{P}(H)$ is the preprojective component containing
all indecomposable projective $H$-modules, $\mathcal{Q}(H)$ is the
preinjective component containing all indecomposable injective
$H$-modules, and $\mathcal{R}(H)$ is the family of all regular
components of $\Gamma_H$. More precisely, we have:
\begin{itemize}
\item[$\bullet$] if $Q_H$ is a Dynkin quiver, then $\mathcal{R}(H)$ is empty and
$\mathcal{P}(H)=\mathcal{Q}(H)$;
\item[$\bullet$] if $Q_H$ is a Euclidean quiver, then $\mathcal{P}(H)\cong (-\mathbb{N})Q^{\op}_H$, $\mathcal{Q}(H)\cong \mathbb{N}Q^{\op}_H$ and
$\mathcal{R}(H)$ is a separating infinite family of stable tubes;
\item[$\bullet$] if $Q_H$ is a wild quiver, then $\mathcal{P}(H) \cong (-\mathbb{N})Q^{\op}_H$, $\mathcal{Q}(H)\cong \mathbb{N}Q^{\op}_H$ and
$\mathcal{R}(H)$ is an infinite family of components of type
$\mathbb{ZA}_{\infty}$.
\end{itemize}
Let $T$ be a tilting module in $\mod H$ and $B=\End_H(T)$ the
associated tilted algebra. Then the tilting $H$-module $T$
determines the torsion pair $(\mathcal{F}(T), \mathcal{T}(T))$ in
$\mod H$, with the torsion-free part $\mathcal{F}(T)=\{X \in \mod
H | \Hom_H(T,X)=0\}$ and the torsion part $\mathcal{T}(T)=\{X \in
\mod H | \Ext^1_H(T,X)=0\}$, and the splitting torsion pair
$(\mathcal{Y}(T), \mathcal{X}(T))$ in $\mod B$, with the
torsion-free part $\mathcal{Y}(T)=\{Y \in \mod B|
\Tor^B_1(Y,T)=0\}$ and the torsion part $\mathcal{X}(T)=\{Y \in
\mod B| Y \otimes_B T=0\}$. Then, by the Brenner-Butler theorem,
the functor $\Hom_H(T,-): \mod H \to \mod B$ induces an
equivalence of $\mathcal{T}(T)$ with $\mathcal{Y}(T)$, and the
functor $\Ext^1_H(T,-): \mod H \to \mod B$ induces an equivalence
of $\mathcal{F}(T)$ with $\mathcal{X}(T)$ (see \cite{BB},
\cite{HRi1}). Further, the images $\Hom_H(T,I)$ of the
indecomposable injective modules $I$ in $\mod H$ via the functor
$\Hom_H(T,-)$ belong to one component $\mathcal{C}_T$ of
$\Gamma_B$, called the connecting component of $\Gamma_B$
determined by $T$, and form a faithful section $\Delta_T$ of
$\mathcal{C}_T$, with $\Delta_T$ the opposite valued quiver
$Q^{\op}_H$ of $Q_H$. Recall that a full connected valued
subquiver $\Sigma$ of a component $\mathcal{C}$ of $\Gamma_B$ is
called a section if $\Sigma$ has no oriented cycles, is convex in
$\mathcal{C}$, and intersects each $\tau_B$-orbit of $\mathcal{C}$
exactly once. Moreover, the section $\Sigma$ is faithful provided
the direct sum of all modules lying on $\Sigma$ is a faithful
$B$-module. The section $\Delta_T$ of the connecting component
$\mathcal{C}_T$ of $\Gamma_B$ has the distinguished property: it
connects the torsion-free part $\mathcal{Y}(T)$ with the torsion
part $\mathcal{X}(T)$, because every predecessor in $\ind B$ of a
module $\Hom_H(T,I)$ from $\Delta_T$ lies in $\mathcal{Y}(T)$ and
every successor of $\tau^-_B \Hom_H(T,I)$ in $\ind B$ lies in
$\mathcal{X}(T)$.

\begin{lem} \label{lem 3.1}
Let $H$ be an indecomposable algebra, $T$ a tilting module in
$\mod H$, and $B=\End_H(T)$ the associated tilted algebra. Then
for any injective module $I$ in $\mod H$, $M_I=\Hom_H(T,I)$ is a
module in $\mod B$ which is not the middle of a short chain.
\end{lem}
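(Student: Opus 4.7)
The plan is to argue by contradiction: suppose $M_I=\Hom_H(T,I)$ is the middle of a short chain $\xymatrix@C=13pt{X \ar[r] & M_I \ar[r] & \tau_B X}$ in $\mod B$ with $X$ indecomposable. Then $\tau_B X \neq 0$ forces $X$ to be non-projective. Because $M_I$ lies on the section $\Delta_T$ of the connecting component $\mathcal{C}_T$, and because every predecessor in $\ind B$ of a module from $\Delta_T$ lies in $\mathcal{Y}(T)$, we deduce $X \in \mathcal{Y}(T)$. The Brenner--Butler theorem then yields $X \cong \Hom_H(T,X')$ for some indecomposable $X' \in \mathcal{T}(T)$, and $\Hom_B(X,M_I) \cong \Hom_H(X',I)$ is nonzero. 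Since $X$ is non-projective in $\mod B$, the module $X'$ is not in $\add T$; in particular $X'$ is non-projective in $\mod H$, because any projective module in $\mathcal{T}(T)$ is $\Ext$-projective in $\mathcal{T}(T)$ and hence belongs to $\add T$.

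The next step is to locate $\tau_H X'$ within the torsion pair $(\mathcal{F}(T),\mathcal{T}(T))$. Over the hereditary algebra $H$, $\mathcal{T}(T)$ is the closure of $\add T$ under quotients; combining a short exact sequence $0 \to K \to T^n \to X' \to 0$ with $\Ext^2_H=0$ shows that for indecomposable $X' \in \mathcal{T}(T)$ the condition $\Ext^1_H(X',T)=0$ is equivalent to $X' \in \add T$. Hence $X' \notin \add T$ gives $\Ext^1_H(X',T) \neq 0$, and the Auslander--Reiten formula for hereditary $H$, namely $D\Ext^1_H(X',T) \cong \Hom_H(T,\tau_H X')$, implies $\Hom_H(T,\tau_H X') \neq 0$. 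Consequently $\tau_H X' \notin \mathcal{F}(T)$, and since $\tau_H X'$ is indecomposable, $\tau_H X' \in \mathcal{T}(T)$.

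Because $\tau_H X' \in \mathcal{T}(T)$, the standard tilting-theoretic formula for the Auslander--Reiten translation over the tilted algebra $B$ gives $\tau_B X \cong \Hom_H(T,\tau_H X')$. A second application of the Brenner--Butler equivalence provides
\[\Hom_B(M_I,\tau_B X) \;\cong\; \Hom_H(I,\tau_H X'),\]
and a second use of the Auslander--Reiten formula yields $\Hom_H(I,\tau_H X') \cong D\Ext^1_H(X',I)$. Since $I$ is injective, $\Ext^1_H(X',I)=0$, and hence $\Hom_B(M_I,\tau_B X)=0$, contradicting the hypothesis.

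The main obstacle is identifying the correct sub-case of the tilted-algebra translation formula: one must exclude $\tau_H X' \in \mathcal{F}(T)$, in which case $\tau_B X$ would sit in $\mathcal{X}(T)$ and the computation of $\Hom_B(M_I,\tau_B X)$ would require handling the more delicate interaction between $\mathcal{Y}(T)$ and $\mathcal{X}(T)$. Once the location of $\tau_H X'$ is secured via the $\Ext$-projective characterization of $\add T$ in the hereditary setting, the desired vanishing follows from two applications of the Auslander--Reiten formula, with the injectivity of $I$ carrying the final step.
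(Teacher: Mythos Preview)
Your strategy is to pull everything back to $\mod H$ via Brenner--Butler and exploit the injectivity of $I$ through the Auslander--Reiten formula; the paper instead argues entirely in $\mod B$, using only the closure of $\mathcal{Y}(T)$ and $\mathcal{X}(T)$ under predecessors and successors together with the position of $\Delta_T$. Your route is natural, but there is a genuine gap at the step ``$\tau_H X'\notin\mathcal{F}(T)$ and $\tau_H X'$ indecomposable, hence $\tau_H X'\in\mathcal{T}(T)$.'' This presumes that the torsion pair $(\mathcal{T}(T),\mathcal{F}(T))$ in $\mod H$ is splitting, and that is \emph{false} in general, even for hereditary $H$; it is the induced pair in $\mod B$ that always splits. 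Concretely, take $H=K(1\to 2\to 3)$ and $T=P_1\oplus P_3\oplus I_1$. Then $X'=I_2\in\mathcal{T}(T)\setminus\add T$, while $\tau_H X'=P_2$ satisfies both $\Hom_H(T,P_2)\neq 0$ (via the inclusion $P_3\hookrightarrow P_2$) and $\Ext^1_H(T,P_2)\neq 0$ (since $\Ext^1_H(I_1,P_2)\cong D\Hom_H(P_2,S_2)\neq 0$), so $P_2$ lies in neither $\mathcal{T}(T)$ nor $\mathcal{F}(T)$. Your dichotomy misses this case, and without $\tau_H X'\in\mathcal{T}(T)$ the translation formula $\tau_B X\cong\Hom_H(T,\tau_H X')$ you invoke is not available as stated.

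The paper sidesteps this entirely. Once $X\in\mathcal{Y}(T)$, closure under predecessors gives $\tau_B X\in\mathcal{Y}(T)$; one checks $\tau_B X\notin\Delta_T$, and then $\Hom_B(M_I,\tau_B X)\neq 0$ forces $\tau_B X$ to be a successor of some $\tau_B^{-1}Z$ with $Z\in\Delta_T$, hence $\tau_B X\in\mathcal{X}(T)$, a contradiction. In Brenner--Butler language this reads: write $\tau_B X=\Hom_H(T,Z)$ for some indecomposable $Z\in\mathcal{T}(T)$ (no identification with $\tau_H X'$ needed); then $\Hom_B(M_I,\tau_B X)\cong\Hom_H(I,Z)$, and since over a hereditary algebra every quotient of an injective is injective, a nonzero map $I\to Z$ forces $Z$ itself to be injective, so $\tau_B X\in\Delta_T$ and $X=\tau_B^{-}\tau_B X\in\mathcal{X}(T)$ --- the same contradiction. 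Your argument can be completed along these lines, but as written the crucial intermediate step does not hold.
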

\begin{proof}
Consider the connecting component $\mathcal{C}_T$ of $\Gamma_B$
determined by $T$  and its canonical section $\Delta_T$ given by
the images of  a complete set of pairwise nonisomorphic injective
$H$-modules via the functor $\Hom_H(T, -): \mod H \rightarrow \mod
B$.  Then $M_I$ is isomorphic to a direct sum of indecomposable
modules lying on $\Delta_T$. Suppose $M_I$ is the middle of a
short chain $\xymatrix@C=13pt{X \ar[r] &  M_I \ar[r] & \tau_BX}$
in $\mod B$. Then $X$ is a predecessor in $\ind B$ of an
indecomposable module $Y$ lying on $\Delta_T$, and consequently $Y
\in \mathcal{Y}(T)$ forces $X \in \mathcal{Y}(T)$. Hence $\tau_BX$
also belongs to $\mathcal{Y}(T)$ since $\mathcal{Y}(T)$ is closed
under predecessors in $\ind B$. In particular, $\tau_BX$ does not
lie on $\Delta_T$. Then $\Hom_B(M_I,\tau_BX) \neq 0$ implies that
there is an indecomposable module $Z$ on $\Delta_T$ such that
$\tau_BX$ is a successor of $\tau_B^{-1}Z$ in $\ind B$. But then
$\tau^{-1}_BZ \in \mathcal{X}(T)$ forces $\tau_BX \in
\mathcal{X}(T)$, because $\mathcal{X}(T)$ is closed under
successors in $\ind B$. Hence the indecomposable $B$-module
$\tau_BX$ is simultaneously in $\mathcal{Y}(T)$ and
$\mathcal{X}(T)$, a contradiction. Therefore, $M_I$ is indeed a
module in $\mod B$ which is not the middle of a short chain.
\end{proof}

Recently, the authors established in \cite{JMS1} the following
characterization of tilted algebras.
\begin{thm} \label{thm jms}
An algebra $B$ is a tilted algebra if and only if $\mod B$ admits
a sincere module $M$ which is not the middle of a short chain.
\end{thm}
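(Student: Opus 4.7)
The plan is to prove the two implications separately. For the easier implication ($\Rightarrow$), suppose $B=\End_H(T)$ is a tilted algebra. Take $M=\Hom_H(T,DH)=\bigoplus_{j=1}^n \Hom_H(T,I_j)$ where $I_1,\dots,I_n$ exhausts the indecomposable injective $H$-modules. The indecomposable summands of $M$ are then precisely the vertices of the canonical section $\Delta_T$ of the connecting component $\mathcal{C}_T$ of $\Gamma_B$. Since $\Delta_T$ is a faithful section, $M$ is faithful as a $B$-module, and for an artin algebra faithfulness yields an embedding $B\hookrightarrow M^n$ for some $n$, so every simple $B$-module appears as a composition factor of $M$, making $M$ sincere. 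That $M$ is not the middle of a short chain follows by a direct extension of Lemma \ref{lem 3.1}: the proof given there uses only that each indecomposable summand of the tested module lies on $\Delta_T$, and the torsion-theoretic contradiction $\tau_BX\in\mathcal{Y}(T)\cap\mathcal{X}(T)$ carries over verbatim to $\Hom_H(T,DH)$.

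For the converse ($\Leftarrow$), let $M\in\mod B$ be a sincere module which is not the middle of a short chain, with pairwise nonisomorphic indecomposable summands $M_1,\dots,M_s$. By the remarks in the introduction, $s\le \mathrm{rank}\,K_0(B)$, and each $M_j$ is determined up to isomorphism by its composition factors. A first step is to locate the $M_j$ in $\Gamma_B$: Lemma \ref{lem 2.4} prevents any $M_j$ from lying in a stable tube, and Lemma \ref{lem 2.2} rules out ray tubes containing a projective and coray tubes containing an injective (in either case, the splitting projection $M\to M_j$ or inclusion $M_j\hookrightarrow M$ would contradict the asserted $\Hom$-vanishing). Hence each $M_j$ sits in an acyclic, or at worst semiregular noncyclic, component of $\Gamma_B$.

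The core of the proof is to show that the $M_j$ all lie in a single acyclic component $\mathcal{C}$ of $\Gamma_B$ with only finitely many $\tau_B$-orbits, and that $\{M_1,\dots,M_s\}$, possibly enlarged by sectional connectors, forms a faithful section $\Sigma$ of $\mathcal{C}$. Proposition \ref{prop 2.6} and Lemma \ref{lem 2.5} are the decisive tools: if some $M_i$ were a nonsectional predecessor of some $M_j$ in $\mathcal{C}$, then Proposition \ref{prop 2.6}(i) would supply a module $U\in \mathcal{S}_{M_j}$ with $\Hom_B(M_i,\tau_BU)\ne 0$, while the sectional path $U\to\cdots\to M_j$ combined with Lemma \ref{lem 2.5} would give $\Hom_B(U,M_j)\ne 0$. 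Composing with the inclusion $M_j\hookrightarrow M$ and the projection $M\twoheadrightarrow M_i$ then produces a short chain $U\to M\to \tau_BU$ with middle $M$, contradicting the hypothesis; the dual statement from Proposition \ref{prop 2.6}(ii) handles nonsectional successors. This analysis forces the $M_j$ to span a convex subquiver of $\mathcal{C}$ meeting each $\tau_B$-orbit exactly once, and sincerity of $M$ upgrades convexity to faithfulness of $\Sigma$.

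The main obstacle will be carrying out this section-building step in full, in particular verifying that $\mathcal{C}$ has only finitely many $\tau_B$-orbits (so that Proposition \ref{prop 2.6} applies) and that the technical hypotheses of Proposition \ref{prop 2.6} on injective/projective nonsectional predecessors/successors of the $M_j$ are satisfied where needed; both of these will require a further iterative use of the short-chain hypothesis to exclude unwanted modules in $\mathcal{C}$. Once a faithful section $\Sigma$ of an acyclic component of $\Gamma_B$ is in hand, one concludes via the classical criterion characterizing tilted algebras through a faithful generalized standard section: take $H$ to be the hereditary algebra whose valued quiver is $\Sigma^{\op}$, and $T$ the tilting $H$-module whose image under $\Hom_H(T,-)$ recovers $\Sigma$, so that $B\cong \End_H(T)$, completing the proof.
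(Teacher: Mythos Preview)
The paper does not prove this theorem at all: it is quoted from the companion paper \cite{JMS1} and used as a black box to launch the proof of Theorem~\ref{thm 1.1}. So there is no ``paper's own proof'' to compare your proposal against; what the present paper does instead is assume Theorem~\ref{thm jms}, conclude that $B=A/\ann_A(M)$ is tilted, and then, \emph{already knowing the structure of $\Gamma_B$ for a tilted algebra}, use Propositions~\ref{prop 4.1}--\ref{prop 4.3} to locate $M$ on a section of the connecting component.

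Your forward direction is fine and is exactly Lemma~\ref{lem 3.1} applied to $I=D(H)$. For the converse, your sketch is in the right spirit but the obstacles you flag are real and unresolved. Specifically: (1) ruling out stable tubes and ray/coray tubes with a projective/injective does not exhaust the possible components of $\Gamma_B$ for an arbitrary artin algebra, so you have not yet placed the $M_j$ in an acyclic component; (2) Proposition~\ref{prop 2.6} needs finitely many $\tau_B$-orbits, which you have not established and which is false in general components; (3) nothing in your outline forces the $M_j$ into a \emph{single} component; (4) the criterion you invoke at the end (Theorem~\ref{thm 3.3}) requires $\Hom_B(X,\tau_BY)=0$ for all $X,Y$ on the section $\Sigma$, not just for the $M_j$, and your enlarged $\Sigma$ may contain modules that are not summands of $M$. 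These are precisely the difficulties that make the converse implication a separate paper; the tools collected in Sections~2 and~3 here are calibrated to work \emph{after} one knows $B$ is tilted, not to prove that it is.
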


We exhibit now a handy criterion for an indecomposable algebra to
be a tilted algebra established independently in \cite{L2} and
\cite{S1}.
\begin{thm} \label{thm 3.3}
Let $B$ be an indecomposable algebra. Then $B$ is a tilted algebra
if and only if the Auslander-Reiten quiver $\Gamma_B$ of $B$
admits a component $\mathcal{C}$ with a faithful section  $\Delta$
such that $\Hom_B(X, \tau_BY)=0$ for all modules $X$ and $Y$ in
$\Delta$. Moreover, if this is the case and $T^{\ast}_{\Delta}$ is
the direct sum of all indecomposable modules lying on $\Delta$,
then $H_{\Delta}=\End_B(T^{\ast}_{\Delta})$ is an indecomposable
hereditary algebra, $T_{\Delta}=D(T^{\ast}_{\Delta})$ is a tilting
module in $\mod H_{\Delta}$, and the tilted algebra
$B_{\Delta}=\End_{H_{\Delta}}(T_{\Delta})$ is the basic algebra of
$B$.
\end{thm}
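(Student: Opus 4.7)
The statement is a biconditional. The forward direction follows from the structure of the connecting component recalled in the paragraph preceding the statement: if $B=\End_H(T)$ for a hereditary algebra $H$ and a tilting module $T\in\mod H$, the connecting component $\mathcal{C}_T$ of $\Gamma_B$ contains the canonical faithful section $\Delta_T$ formed by the modules $\Hom_H(T,I)$ for $I$ ranging over indecomposable injective $H$-modules. For $X,Y\in\Delta_T$, the vanishing $\Hom_B(X,\tau_B Y)=0$ would follow from the Auslander-Reiten formula combined with the Brenner-Butler equivalence, which identifies $\Ext^1_B$ between the relevant modules in $\mathcal{Y}(T)$ with $\Ext^1_H$ between injective $H$-modules, hence zero; a small extra argument rules out factorizations through injectives, using that $\tau_B Y$ is itself not injective in $\mod B$.

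The backward direction is the substantive part. Given a faithful section $\Delta$ of a component $\mathcal{C}$ of $\Gamma_B$ with $\Hom_B(X,\tau_B Y)=0$ for all $X,Y\in\Delta$, the plan is as follows. First, set $T^{*}=T^{*}_{\Delta}$, the direct sum over one representative from each isomorphism class of indecomposable modules on $\Delta$. The Auslander-Reiten formula, combined with the hypothesis, yields $\Ext^1_B(T^{*},T^{*})=0$. Since $\Delta$ intersects each $\tau_B$-orbit of $\mathcal{C}$ exactly once and is acyclic, any path lying inside $\Delta$ is sectional, so Lemma \ref{lem 2.5} produces nonzero compositions of the corresponding irreducible homomorphisms, providing the arrows of the valued quiver of $H_{\Delta}=\End_B(T^{*})$.

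The core of the argument is then to show that $H_{\Delta}$ is an indecomposable hereditary algebra whose valued quiver is $\Delta^{\op}$. The key inputs are the convexity of $\Delta$ inside $\mathcal{C}$, which forces every morphism between two modules on $\Delta$ to factor through modules on $\Delta$, together with the faithfulness of $\Delta$ and the indecomposability of $B$. Once heredity is established, the number of indecomposable summands of $T^{*}$ equals the rank of $K_0(H_{\Delta})$, and setting $T_{\Delta}=D(T^{*}_{\Delta})$ one obtains by standard duality that $T_{\Delta}$ is a tilting module in $\mod H_{\Delta}$. Finally, the canonical algebra map from the basic algebra of $B$ to $B_{\Delta}=\End_{H_{\Delta}}(T_{\Delta})$ is shown to be an isomorphism using the Brenner-Butler equivalence, which identifies $\mathcal{C}$ with the connecting component of $\Gamma_{B_{\Delta}}$ determined by $T_{\Delta}$.

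The principal obstacle is proving that $H_{\Delta}$ is hereditary, which amounts to showing that the irreducible morphisms along $\Delta$ generate $\End_B(T^{*})$ subject only to the commutativity (mesh) relations coming from the sectional squares in $\Delta$. This step requires a delicate control of $\Hom$-spaces between modules on $\Delta$ via convexity and faithfulness, and is where the hypothesis $\Hom_B(X,\tau_B Y)=0$ is truly exploited. Once heredity is in hand, the remaining assertions — that $T_{\Delta}$ is tilting and that $B_{\Delta}$ is the basic algebra of $B$ — follow formally from the Brenner-Butler machinery.
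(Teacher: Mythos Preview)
The paper does not prove Theorem \ref{thm 3.3}. It is stated explicitly as ``a handy criterion for an indecomposable algebra to be a tilted algebra established independently in \cite{L2} and \cite{S1}'' and is quoted without proof, as a tool to be used later in Section~4. Consequently there is no proof in the paper against which your proposal can be compared.

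That said, a brief remark on your sketch: the overall architecture is the one used in the original sources, but the step you flag as ``the principal obstacle'' is handled there somewhat differently from what you outline. Your claim that convexity of $\Delta$ in $\mathcal{C}$ ``forces every morphism between two modules on $\Delta$ to factor through modules on $\Delta$'' is not correct as stated: convexity controls only paths of irreducible maps inside the component, not arbitrary morphisms, and a priori there can be nonzero maps in $\rad^{\infty}_B$ between modules on $\Delta$ that do not factor through $\mathcal{C}$ at all. The actual proofs in \cite{L2} and \cite{S1} first use the hypotheses (faithfulness of $\Delta$ together with $\Hom_B(X,\tau_B Y)=0$) to show that $T^{*}_{\Delta}$ is a tilting $B$-module of projective dimension at most one, and then deduce the structure of $\End_B(T^{*}_{\Delta})$ and the identification of $B$ with $\End_{H_{\Delta}}(T_{\Delta})$ from tilting theory, rather than by a direct combinatorial analysis of morphisms along $\Delta$. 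Your sketch would need to supply that missing tilting argument (in particular the bound on projective dimension and the fact that the number of summands equals the rank of $K_0(B)$) before the heredity of $H_{\Delta}$ and the remaining conclusions can be obtained.
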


Let $H$ be an indecomposable hereditary algebra not of Dynkin
type, that is, the valued quiver $Q_H$ of $H$ is a Euclidean or
wild quiver. Then by a concealed algebra of type $Q_H$ we mean an
algebra $B=\End_H(T)$ for a tilting module $T$ in
$\add(\mathcal{P}(H))$ (equivalently, in $\add(\mathcal{Q}(H))$).
If $Q_H$ is a Euclidean quiver, $B$ is said to be a tame concealed
algebra. Similarly, if $Q_H$ is a wild quiver, $B$ is said to be a
wild concealed algebra. Recall that the Auslander-Reiten quiver
$\Gamma_B$ of a concealed algebra $B$ is of the form:
\[\Gamma_B=\mathcal{P}(B) \vee \mathcal{R}(B) \vee \mathcal{Q}(B),\]
where $\mathcal{P}(B) $ is a preprojective component  containing
all indecomposable projective $B$-modules, $\mathcal{Q}(B)$ is a
preinjective component  containing all indecomposable injective
$B$-modules and $\mathcal{R}(B)$ is either an infinite family of
stable tubes separating $\mathcal{P}(B)$ from $\mathcal{Q}(B)$ or
an infinite family of components of type $\mathbb{ZA}_{\infty}$.

\begin{prop}\label{prop 3.4}
Let $B$ be a wild concealed algebra, $\mathcal{C}$ a regular
component of $\Gamma_B$, $M$ a module in $\mathcal{P}(B)$ and $N$
a module in $\mathcal{Q}(B)$. Then the following statements hold:
\begin{enumerate}
\renewcommand{\labelenumi}{\rm(\roman{enumi})}
\item $\Hom_B(M,X)\neq 0$ for all but finitely many modules $X$ in
$\mathcal{C}$;
\item $\Hom_B(X,N)\neq 0$ for all but finitely many modules $X$ in $\mathcal{C}$.
\end{enumerate}
In particular, all but finitely many modules in $\mathcal{C}$ are
sincere.
\end{prop}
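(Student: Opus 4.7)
The plan is to reduce both assertions to the corresponding statements for the underlying wild hereditary algebra $H$ via the tilting equivalence. Write $B = \End_H(T)$ with $H$ an indecomposable wild hereditary algebra and $T$ a tilting $H$-module in $\add \mathcal{P}(H)$. Since $\tau_H T$ is preprojective and $\Hom_H(X', \tau_H T) = 0$ for any $X' \in \mathcal{R}(H) \cup \mathcal{Q}(H)$, we have $\mathcal{R}(H) \cup \mathcal{Q}(H) \subseteq \mathcal{T}(T)$, so the Brenner-Butler functor $\Hom_H(T,-)\colon \mathcal{T}(T) \to \mathcal{Y}(T)$ carries each regular component of $\Gamma_H$ bijectively to a regular component of $\Gamma_B$. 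Up to the finitely many indecomposable summands of $M$ or $N$ lying in $\mathcal{X}(T)$ (which we handle by $\tau_B$-shifting into $\mathcal{Y}(T)$), we may write $M = \Hom_H(T, M')$ and $N = \Hom_H(T, N')$ with $M' \in \mathcal{P}(H)$, $N' \in \mathcal{Q}(H)$, and $X = \Hom_H(T, X')$ for $X \in \mathcal{C}$ and $X' \in \mathcal{C}'$, where $\mathcal{C}'$ is the regular component of $\Gamma_H$ corresponding to $\mathcal{C}$. The equivalence then yields $\Hom_B(M, X) \cong \Hom_H(M', X')$ and $\Hom_B(X, N) \cong \Hom_H(X', N')$, reducing (i) and (ii) to the analogous hereditary statements.

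For the hereditary version of (i) it suffices to treat $M'$ indecomposable; write $M' = \tau_H^{-n} P$ for $P$ an indecomposable projective $H$-module with top simple $S$. Iterating the Auslander-Reiten formula $\Hom_H(\tau_H^{-1} Y, X') \cong \Hom_H(Y, \tau_H X')$, valid because preprojective modules over wild hereditary $H$ are never injective, gives $\Hom_H(M', X') \cong \Hom_H(P, \tau_H^n X')$; the latter is nonzero precisely when $S$ occurs as a composition factor of $\tau_H^n X'$. So it suffices to show that all but finitely many modules in $\mathcal{C}'$ are sincere. Since $\mathcal{C}'$ has shape $\mathbb{ZA}_\infty$, let $(E_i)_{i\in\mathbb{Z}}$ parametrise the $\tau_H$-orbit of its quasi-simples. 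The dimension vectors $[E_i] = \Phi_H^{\,i}[E_0]$ iterate the Coxeter transformation, and for wild $H$ the matrix $\Phi_H$ has a dominant eigenvalue $\rho > 1$ with associated strictly positive Perron eigenvector, and likewise for $\rho^{-1}$; hence $[E_i]$ has all coordinates strictly positive for $|i|$ sufficiently large. Because any module of quasi-length $\ell$ in $\mathcal{C}'$ has dimension vector $\sum_{j=0}^{\ell-1} [E_{i+j}]$ for a suitable $i$, this sincerity propagates to all but finitely many modules in $\mathcal{C}'$.

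Assertion (ii) is then proved dually, interchanging preprojective and preinjective $H$-modules in the argument above. The sincerity statement in the proposition follows from (i) applied to $M = B = \bigoplus_i P_i$: for each indecomposable projective $P_i$ of $B$ the set $\{X \in \mathcal{C}\colon \Hom_B(P_i, X) = 0\}$ is finite, and a finite intersection over $i$ shows that all but finitely many $X \in \mathcal{C}$ admit a nonzero map from every $P_i$, equivalently have every simple $B$-module as a composition factor. The main obstacle I foresee is the spectral step: verifying that the extremal real eigenvectors of $\Phi_H$ are strictly positive when $H$ is wild, so that quasi-simples in a regular component of $\Gamma_H$ are eventually sincere. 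A secondary technical point is the clean handling of the finitely many $B$-modules in $\mathcal{X}(T)$ that do not arise directly as $\Hom_H(T, -)$ of an $H$-module.
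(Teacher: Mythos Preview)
Your argument for (i) is the paper's: reduce to $H$ via the Brenner--Butler equivalence $\Hom_H(T,-)\colon\mathcal T(T)\to\mathcal Y(T)$, write the preprojective $H$-module as $\tau_H^{-n}P$, use the hereditary identity $\Hom_H(\tau_H^{-n}P,X')\cong\Hom_H(P,\tau_H^{n}X')$, and invoke cofinite sincerity of the modules in a regular component of $\Gamma_H$. The paper simply cites Baer for this last fact; your Coxeter--Perron sketch is essentially the content of that citation. (One small correction: since $T$ is preprojective, $\mathcal P(B)\subseteq\mathcal Y(T)$, so $M$ never has summands in $\mathcal X(T)$; the issue you flag arises only for $N$.)

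The genuine gap is in (ii), precisely at the point you anticipate. Your proposed fix---$\tau_B$-shifting a summand $N\in\mathcal Q(B)\cap\mathcal X(T)$ into $\mathcal Y(T)$---does not go through: for a concealed algebra that is not hereditary, $\tau_B$ is not compatible with $\Hom$-spaces the way $\tau_H$ is, so $\Hom_B(X,\tau_B^kN)\neq 0$ for cofinitely many $X$ gives no control over $\Hom_B(X,N)$. The paper sidesteps this by changing presentations: a wild concealed $B$ can also be written as $\End_{H^*}(T^*)$ with $T^*$ a tilting module in $\add(\mathcal Q(H^*))$, and then the equivalence $\Ext^1_{H^*}(T^*,-)\colon\mathcal F(T^*)\to\mathcal X(T^*)$ places all of $\mathcal R(B)\cup\mathcal Q(B)$ inside $\mathcal X(T^*)$, so every preinjective $N$ is handled uniformly and the dual of the argument for (i) applies with no exceptional cases. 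An equally clean alternative you could use instead is the standard duality $D\colon\mod B\to\mod B^{\op}$: since $B^{\op}$ is again wild concealed, (ii) for $B$ follows at once from (i) for $B^{\op}$.
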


\begin{proof} (i) Let $H$ be a wild hereditary algebra and $T$ a tilting module in $\add(\mathcal{P}(H))$ such that $B=\End_H(T)$. Recall that the
functor $\Hom_H(T,-): \mod H \rightarrow \mod B$ induces an
equivalence of the torsion part $\mathcal{T}(T)$ of $\mod H$ and
the torsion-free part $\mathcal{Y}(T)$ of $\mod B$. Moreover, we
have the following facts:
\begin{itemize}
\item[{(a)}] the images under the functor $\Hom_H(T,-)$ of the regular components from $\mathcal{R}(H)$ form the family $\mathcal{R}(B)$ of all regular components
of $\Gamma_{B}$;
\item[{(b)}] the images under the functor $\Hom_H(T,-)$ of all indecomposable modules in $\mathcal{P}(H)\cap\mathcal{T}(T)$ form the unique preprojective
component $\mathcal{P}(B)$ of $\Gamma_B$.
\end{itemize}

\noindent Since $\mathcal{C}$ is in $\mathcal{R}(B)$, there exists
a component $\mathcal{D}$ in $\mathcal{R}(H)$ such that
$\mathcal{C}=\Hom_H(T,\mathcal{D})$. We note that $\mathcal{C}$
and $\mathcal{D}$ are of the form ${\Bbb Z}{\Bbb A}_{\infty}$. It
follows from \cite{Bae} (see also \cite[Corollary XVIII.2.4]{SS2})
that all but finitely many modules in $\mathcal{D}$ are sincere
$H$-modules. We may choose an indecomposable module $U$ in
$\mathcal{P}(H)\cap\mathcal{T}(T)$ such that $M=\Hom_H(T,U)$.
Further, there exists an indecomposable projective module $P$ in
$\mathcal{P}(H)$ such that $U=\tau_H^{-m}P$ for some integer
$m\geq 0$. Take now an indecomposable module $Z$ in $\mathcal{D}$.
Then we obtain isomorphisms of $R$-modules
\[ \Hom_H(U,Z)\cong\Hom_H(\tau_H^{-m}P,Z)\cong\Hom_H(P,\tau_H^{m}Z), \]
because $H$ is hereditary (see \cite[Corollary IV.2.15]{ASS}).
Since $\Hom_H(P,R)\neq 0$ for all but finitely many modules $R$ in
$\mathcal{D}$, we conclude that $\Hom_H(U,Z)\neq 0$ for all but
finitely many modules $Z$ in $\mathcal{D}$. Applying now the
equivalence of categories $\Hom_H(T,-): \mathcal{T}(T)\to
\mathcal{Y}(T)$ and the equalities
$\mathcal{P}(B)=\Hom_H(T,\mathcal{P}(H)\cap\mathcal{T}(T))$,
$\mathcal{C}=\Hom_H(T,\mathcal{D})$, and $M=\Hom_H(T,U)$, we
obtain that $\Hom_B(M,X)\neq 0$ for all but finitely many modules
$X$ in $\mathcal{C}$.

(ii) We note that the preinjective component $\mathcal{Q}(B)$ is
the connecting component $\mathcal{C}_T$ of $\Gamma_B$ determined
by $T$, and is obtained by gluing the image
$\Hom_H(T,\mathcal{Q}(H))$ of the preinjective component
$\mathcal{Q}(H)$ of $\Gamma_H$ with a finite part consisting of
all indecomposable modules of the torsion part
$\mathcal{X}(T)=\Ext_H^1(T,\mathcal{F}(T))$ of $\mod B$ (see
\cite[Theorem VIII.4.5]{ASS}). But the wild concealed algebra $B$
is also of the form $B=\End_{H^*}(T^*)$, where $H^*$ is a wild
hereditary algebra and $T^*$ is a tilting module in
$\add(\mathcal{Q}(H^*))$. Then the functor $\Ext_{H^*}^1(T^*,-):
\mod H^*\to\mod B$ induces an equivalence of the torsion-free part
$\mathcal{F}(T^*)$ of $\mod H^*$ and the torsion part
$\mathcal{X}(T^*)$ of $\mod B$. Moreover, we have the following
facts:
\begin{itemize}
\item[{(a$^*$)}] the images under the functor $\Ext_{H^*}^1(T^*,-)$ of the regular components from $\mathcal{R}(H^*)$ form the family $\mathcal{R}(B)$ of all regular
components of $\Gamma_B$;
\item[{(b$^*$)}] the images under the functor $\Ext_{H^*}^1(T^*,-)$ of all indecomposable modules in $\mathcal{Q}(H)\cap\mathcal{F}(T)$ form the
unique preinjective component $\mathcal{Q}(B)$ of $\Gamma_B$.
\end{itemize}
In particular, we have that
$\mathcal{C}=\Ext_{H^*}^1(T^*,\mathcal{D}^*)$ for a component
$\mathcal{D}^*$ in $\mathcal{R}(H^*)$. We then conclude that
$\Hom_B(X,N)\neq 0$ for all but finitely many modules $X$ in
$\mathcal{C}$, applying arguments dual to those used in the proof
of (i).

The fact that all but finitely many modules in $\mathcal{C}$ are
sincere follows from (i) (equivalently (ii)), because
$\mathcal{P}(B)$ contains all indecomposable projective
$B$-modules and $\mathcal{Q}(B)$ contains all indecomposable
injective $B$-modules.
\end{proof}

A prominent role in our considerations will be played by the
following consequence of a result of D. Baer \cite{Bae} (see
\cite[Theorem XVIII.5.2]{SS2}).
\begin{thm} \label{thm 3.5}
Let $B$ be a wild concealed algebra, and $M,N$ indecomposable
$B$-modules lying in regular components of $\Gamma_B$. Then there
exists a positive integer $m_0$ such that $\Hom_B(M,
\tau_B^mN)\neq 0$ for all integers $m \geqslant m_0$.
\end{thm}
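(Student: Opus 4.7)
The plan is to transport the statement from the wild concealed algebra $B$ back to a wild hereditary algebra $H$ via the tilting equivalence, and then to apply the original result of D.~Baer stated at the hereditary level (the cited \cite[Theorem XVIII.5.2]{SS2} is of this form).

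First, write $B=\End_H(T)$ for some wild hereditary algebra $H$ and a tilting module $T\in\add(\mathcal{P}(H))$. Since every indecomposable summand of $T$ is preprojective, every regular module $X$ in $\mathcal{R}(H)$ satisfies $\Ext_H^1(T,X)=0$ (by Auslander--Reiten duality, this reduces to $\Hom_H(\tau_H^{-1}X,T)=0$, which holds because $\tau_H^{-1}X$ is regular while $T$ is preprojective). Hence $\mathcal{R}(H)\subseteq\mathcal{T}(T)$, and by fact (a) recalled in the proof of Proposition~\ref{prop 3.4} the functor $\Hom_H(T,-)\colon\mathcal{T}(T)\to\mathcal{Y}(T)$ induces a bijection between $\mathcal{R}(H)$ and $\mathcal{R}(B)$ which is an isomorphism of translation quivers; in particular $\tau_B\circ\Hom_H(T,-)\cong\Hom_H(T,-)\circ\tau_H$ on regular modules.

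Second, since $M$ and $N$ lie in regular components of $\Gamma_B$, choose indecomposable regular $H$-modules $M',N'\in\mathcal{R}(H)$ with $M\cong\Hom_H(T,M')$ and $N\cong\Hom_H(T,N')$. Since $\tau_H^m N'$ remains regular for every $m\geq 0$, the compatibility above gives $\tau_B^m N\cong\Hom_H(T,\tau_H^m N')$, and the equivalence of categories $\Hom_H(T,-)\colon\mathcal{T}(T)\to\mathcal{Y}(T)$ yields an isomorphism of $R$-modules
\[
\Hom_B(M,\tau_B^m N)\;\cong\;\Hom_H(M',\tau_H^m N')
\]
for every $m\geq 0$.

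Third, apply Baer's theorem to the wild hereditary algebra $H$ and the indecomposable regular modules $M',N'$: there exists $m_0\geq 1$ such that $\Hom_H(M',\tau_H^m N')\neq 0$ for all $m\geq m_0$. Combining with the isomorphism above delivers the required conclusion.

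The only nontrivial point is the compatibility of $\tau_B$ with $\tau_H$ on regular modules under the equivalence $\Hom_H(T,-)$, and this is a standard consequence of the Brenner--Butler theorem together with the fact that, for $T\in\add(\mathcal{P}(H))$, the connecting component is the preinjective component $\mathcal{Q}(B)$ so that the regular part of $\Gamma_H$ is transported intact into the regular part of $\Gamma_B$. The rest is a clean reduction to Baer's hereditary result, which will not be re-proved here.
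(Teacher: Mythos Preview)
The paper does not supply a proof of Theorem~\ref{thm 3.5}; it simply records it as ``a consequence of a result of D.~Baer \cite{Bae} (see \cite[Theorem XVIII.5.2]{SS2})''. Your proposal spells out precisely this reduction: transport $M$ and $N$ back to regular $H$-modules $M'$, $N'$ via the tilting equivalence $\Hom_H(T,-)$, use the compatibility $\tau_B\Hom_H(T,-)\cong\Hom_H(T,-)\tau_H$ on the regular part (which holds because $T$ is preprojective, so the regular components of $\Gamma_H$ lie entirely in $\mathcal{T}(T)$ and are carried intact into $\mathcal{R}(B)$), and then invoke Baer's theorem at the hereditary level. This is correct and is exactly the intended derivation; there is nothing further to compare.
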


\begin{lem} \label{lem 3.6}
Let $B$ be a wild concealed algebra and $\mathcal{C}$ a regular
component of $\Gamma_B$. Then any indecomposable module $N$ in
$\mathcal{C}$ is the middle of a short chain in $\mod B$.
\end{lem}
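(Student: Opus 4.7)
The plan is to produce an indecomposable $B$-module $X$ lying in $\mathcal{C}$ together with nonzero homomorphisms $X\to N$ and $N\to\tau_B X$, exhibiting $N$ as the middle of a short chain. Since $\mathcal{C}$ is a regular (hence stable) component of a wild concealed algebra, necessarily of the form $\mathbb{ZA}_\infty$, each module $\tau_B^k N$, $k\in\mathbb{Z}$, is an indecomposable $B$-module in $\mathcal{C}$, and I would look for $X$ among these $\tau_B$-shifts, setting $X=\tau_B^{m-1}N$ for a suitably large positive integer $m$, so that $\tau_B X=\tau_B^m N$. Applying Theorem \ref{thm 3.5} to the pair of regular indecomposables $(N,N)$ then furnishes a positive integer $m_0$ with $\Hom_B(N,\tau_B^m N)\neq 0$ for every $m\geq m_0$; for any such $m$ the right-hand arrow of the short chain is automatic, since $\Hom_B(N,\tau_B X)=\Hom_B(N,\tau_B^m N)\neq 0$.

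The decisive step, and the main obstacle, is to secure the left-hand arrow, i.e.\ to show that $\Hom_B(\tau_B^{m-1}N,N)\neq 0$ for $m$ sufficiently large. This is a nonvanishing in the $\tau_B$-direction opposite to the one controlled by Theorem \ref{thm 3.5}, and does not follow by merely shuffling the conclusion of that theorem. To handle it I would invoke Baer's theorem in its full two-sided form (see \cite{Bae} and \cite[Chapter XVIII]{SS2}): for any two regular indecomposable modules $U,V$ over a connected wild hereditary algebra $H$, the dimensions $\dim \Hom_H(U,\tau_H^n V)$ grow exponentially both as $n\to+\infty$ and as $n\to-\infty$, as a consequence of the exponential growth of dimension vectors under the Coxeter transformation; in particular $\Hom_H(U,\tau_H^n V)\neq 0$ for all $|n|$ sufficiently large. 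Pulling this back to $B$ via the tilting equivalence $\Hom_H(T,-)\colon \mathcal{T}(T)\to \mathcal{Y}(T)$ (or its counterpart $\Ext^1_{H^*}(T^*,-)\colon \mathcal{F}(T^*)\to \mathcal{X}(T^*)$) recalled before Proposition \ref{prop 3.4}, which carries regular components of $\Gamma_H$ onto regular components of $\Gamma_B$ and intertwines the Auslander--Reiten translations on them, I would obtain a positive integer $m_0'$ such that $\Hom_B(\tau_B^m N,N)\neq 0$ for every $m\geq m_0'$.

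Finally, taking any integer $m\geq \max\{m_0,\,m_0'+1\}$ and setting $X=\tau_B^{m-1}N$, one gets $\Hom_B(X,N)=\Hom_B(\tau_B^{m-1}N,N)\neq 0$ (since $m-1\geq m_0'$) and $\Hom_B(N,\tau_B X)=\Hom_B(N,\tau_B^m N)\neq 0$ (since $m\geq m_0$), so $X\to N\to \tau_B X$ is a short chain in $\mod B$ whose middle is the given indecomposable $N$, as required.
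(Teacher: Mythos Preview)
Your argument is correct but follows a different route from the paper's. You take $X=\tau_B^{m-1}N$ on the $\tau_B$-orbit of $N$, which forces you to establish nonvanishing of $\Hom_B$ in \emph{both} $\tau$-directions: besides $\Hom_B(N,\tau_B^mN)\neq 0$ for $m\gg 0$ (Theorem~\ref{thm 3.5}), you also need $\Hom_B(\tau_B^{m}N,N)\neq 0$ for $m\gg 0$. The latter is indeed true and can be found in the literature on wild hereditary algebras, though your parenthetical justification is a bit loose: exponential growth of the dimension vectors $[\tau_H^n V]$ under the Coxeter transformation controls only the Euler form $\langle [U],[\tau_H^nV]\rangle=\dim\Hom-\dim\Ext^1$, not $\dim\Hom$ itself, so an extra step is required. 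By contrast, the paper's proof uses only the one-sided Theorem~\ref{thm 3.5} together with the shape of the $\mathbb{ZA}_\infty$ component: having fixed some $m\geq m_0$ with $\Hom_B(N,\tau_B^mN)\neq 0$, it chooses $X$ of \emph{higher quasi-length} in $\mathcal{C}$ so that there is a sectional path of irreducible epimorphisms from $X$ down to $N$ (giving $\Hom_B(X,N)\neq 0$ immediately) and a sectional path of irreducible monomorphisms from $\tau_B^mN$ up to $\tau_BX$; composing the resulting monomorphism $\tau_B^mN\hookrightarrow\tau_BX$ with a nonzero map $N\to\tau_B^mN$ then yields $\Hom_B(N,\tau_BX)\neq 0$. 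Thus the paper extracts both arrows of the short chain from a single application of Theorem~\ref{thm 3.5}, avoiding any appeal to the two-sided growth estimate.
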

\begin{proof}
Suppose $N$ is an indecomposable module in $\mathcal{C}$.
Obviously $\mathcal{C}$ is of the form $\mathbb{ZA}_{\infty}$.
Applying Theorem \ref{thm 3.5}, we conclude that there is a
positive integer $m_0$ such that $\Hom_B(N, \tau^m_BN) \neq 0$ for
all integers $m \geq m_0$. Then we may take an indecomposable
module $X$ in $\mathcal{C}$ such that there are a sectional path
$\Omega$ from $X$ to $N$ and a sectional path $\Sigma$ from
$\tau^m_BN$ to $\tau_BX$ for some integer $m \geq m_0$. Observe
that all irreducible homomorphisms corresponding to arrows of
$\Sigma$ are monomorphisms whereas all irreducible  homomorphisms
corresponding to arrows of $\Omega$ are epimorphisms. Hence there
are a monomorphism $f: \tau^m_BN \rightarrow \tau_BX$ and an
epimorphism $g: X \rightarrow N$. Since $\Hom_B(N, \tau_B^mN) \neq
0$, we conclude that $\Hom_B(N, \tau_BX) \neq0$. Therefore, we
obtain a short chain  $\xymatrix@C=13pt{X \ar[r] &  N \ar[r] &
\tau_BX}$.
\end{proof}

%%%%%%%%%%%%%%%%%%%%%%%%%%%%%%%%%%%%%%%%%%%%%%%%%44444444444444444444444444444444444444444444444444444444444444444444444444444444444444444444444444444

\section{\normalsize Proofs of Theorem \ref{thm 1.1} and Corollary \ref{cor 1.2}}

Let $A$ be an algebra and $M$ a module in $\mod A$ which is not
the middle of a short chain. By $\ann_A(M)$ we shall denote the
annihilator of $M$ in $A$, that is, the ideal $\{a \in A| Ma=0\}$.
Then $M$ is  a sincere module over the algebra $B=A/ \ann_A(M)$.
Moreover, by \cite[Proposition 2.3]{RSS}, $M$ is not the middle of
a short chain in $\mod B$, since $M$  is not the middle of a short
chain in $\mod A$. Let $B=B_1\times\ldots\times B_m$ be a
decomposition of $B$ into a product of indecomposable algebras and
$M=M_1\oplus\ldots\oplus M_m$ the associated decomposition of $M$
in $\mod B$ with $M_i$ a module in $\mod B_i$ for any $i\in \{1,
\ldots, m\}$. Observe that, for each $i\in \{1, \ldots, m\}$,
$B_i=A/ \ann_A(M_i)$, $M_i$ is a sincere $B_i$-module which is not
the middle of a short chain in $\mod B_i$, and hence $B_i$
is a tilted algebra, by Theorem \ref{thm jms}. Therefore, we may assume that $B$ is an indecomposable algebra. \\
%----------------------------------------------------------------------------------------------------------------------------------------------------

We will start our considerations by showing that for a tilted
algebra $B$ and a sincere $B$-module $M$ which is not the middle
of a short chain, all indecomposable direct summands of $M$ belong
to the same component, which is in fact a connecting component of
$\Gamma_B$. According to a result of C. M. Ringel \cite[p.46]{Ri2}
$\Gamma_B$ admits at most two components containing sincere
sections (slices), and exactly two if and only if $B$ is a
concealed algebra. We shall discuss this case in the following
proposition.

\begin{prop}\label{prop 4.1}
Let $B$ be a concealed algebra and $M$ a sincere $B$-module which is not the middle of a short chain. Then $M \in
\add (\mathcal{C})$ for a connecting component $\mathcal{C}$ of $\Gamma_B$.
\end{prop}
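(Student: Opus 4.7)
The plan is to use the standard decomposition $\Gamma_B=\mathcal{P}(B)\vee\mathcal{R}(B)\vee\mathcal{Q}(B)$ for a concealed algebra and to show in two stages that every indecomposable direct summand of $M$ lies in $\mathcal{P}(B)$, or else every such summand lies in $\mathcal{Q}(B)$. Since $B$ admits the two presentations $B=\End_H(T)$ with $T\in\add(\mathcal{P}(H))$ and $B=\End_{H^*}(T^*)$ with $T^*\in\add(\mathcal{Q}(H^*))$ recalled in the proof of Proposition~\ref{prop 3.4}, both $\mathcal{Q}(B)$ and $\mathcal{P}(B)$ occur as connecting components of $\Gamma_B$, and so either outcome yields the stated conclusion.

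First I would exclude any direct summand of $M$ lying in $\mathcal{R}(B)$. The key general observation is that whenever an indecomposable direct summand $N$ of $M$ is itself the middle of a short chain $\xymatrix@C=13pt{X\ar[r] & N \ar[r] & \tau_BX}$, composing with the canonical inclusion $N\hookrightarrow M$ and projection $M\twoheadrightarrow N$ makes $M$ the middle of the same short chain, contradicting the hypothesis. In the tame concealed case $\mathcal{R}(B)$ is a family of stable tubes and Lemma~\ref{lem 2.4} places every indecomposable regular module as the middle of a short chain; in the wild concealed case the same conclusion is given by Lemma~\ref{lem 3.6}. Hence no indecomposable summand of $M$ lies in $\mathcal{R}(B)$, so $M\in\add(\mathcal{P}(B)\cup\mathcal{Q}(B))$.

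The main step is to rule out that $M$ has summands in both $\mathcal{P}(B)$ and $\mathcal{Q}(B)$. Assume for contradiction that there exist indecomposable summands $M_1$ of $M$ in $\mathcal{P}(B)$ and $M_2$ of $M$ in $\mathcal{Q}(B)$, and fix any regular component $\mathcal{C}$ of $\Gamma_B$. In the tame case $\mathcal{R}(B)$ is a separating family of stable tubes, so Lemma~\ref{lem 2.3} shows that $\Hom_B(M_1,X)\neq 0$ and $\Hom_B(X,M_2)\neq 0$ hold for all but finitely many $X\in\mathcal{C}$; the same two cofiniteness statements hold in the wild case by Proposition~\ref{prop 3.4}. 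Because $\mathcal{C}$ contains neither a projective nor an injective module, $\tau_B$ restricts to a bijection on $\mathcal{C}$, and therefore $\Hom_B(M_1,\tau_BX)\neq 0$ off a finite subset of $\mathcal{C}$ as well. Picking $X\in\mathcal{C}$ satisfying both $\Hom_B(X,M_2)\neq 0$ and $\Hom_B(M_1,\tau_BX)\neq 0$, and then composing a nonzero map $X\to M_2$ with $M_2\hookrightarrow M$, and a nonzero map $M_1\to\tau_BX$ with $M\twoheadrightarrow M_1$, yields a short chain $\xymatrix@C=13pt{X\ar[r] & M \ar[r] & \tau_BX}$, contradicting the hypothesis on $M$.

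The main obstacle is this last construction: one must produce a single regular module $X$ realising a nonzero map into $M$ through $M_2$ and a nonzero map out of $M$ through $M_1$ into $\tau_BX$. The Hom-nonvanishing results of Lemma~\ref{lem 2.3} and Proposition~\ref{prop 3.4}, combined with the $\tau_B$-invariance of cofinite subsets of a regular component, are exactly what make such an $X$ available. Once both obstructions (a regular summand, and summands split between $\mathcal{P}(B)$ and $\mathcal{Q}(B)$) are excluded, $M$ must be concentrated in a single one of $\mathcal{P}(B)$ and $\mathcal{Q}(B)$, each of which is a connecting component, as claimed.
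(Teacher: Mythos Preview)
Your proposal is correct and follows essentially the same approach as the paper: exclude regular summands via Lemmas~\ref{lem 2.4} and~\ref{lem 3.6}, then use Lemma~\ref{lem 2.3} (Euclidean case) or Proposition~\ref{prop 3.4} (wild case) to manufacture a short chain through a regular module when summands occur in both $\mathcal{P}(B)$ and $\mathcal{Q}(B)$. Your explicit remark that $\tau_B$ permutes each regular component, so cofinite nonvanishing of $\Hom_B(M_1,-)$ transfers to $\Hom_B(M_1,\tau_B-)$, makes the choice of $X$ slightly more transparent than the paper's terse phrasing, but the argument is the same.
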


\begin{proof}

Observe that $M$ has no indecomposable direct summands in $\mathcal{R}(B)$, by Lemmas \ref{lem 2.4} and \ref{lem 3.6}. Hence we may assume that $M=M_P \oplus M_Q$, where $M_P$ is a direct summand of $M$ contained in $\add(\mathcal{P}(B))$,
whereas $M_Q$ is a direct summand of $M$ which belongs to $\add(\mathcal{Q}(B))$.
We claim that $M_P=0$ or $M_Q=0$. Suppose $M_P \neq 0$ and $M_Q \neq 0$.
 Let $M'$  be an indecomposable
direct summand of $M_P$ and $M''$ an indecomposable direct summand of $M_Q$.

Consider the case when $B$ is a concealed algebra of Euclidean type, that is, $\mathcal{R}(B)$ is a family of stable tubes.
Then it follows from Lemma \ref{lem 2.3}
that there is a module $Z$ in $\mathcal{R}(B)$ such that
$\Hom_B(M', \tau_BZ)\neq 0$ and $\Hom_B(Z, M'')\neq 0$. This contradicts the assumption that $M$ is not the middle
of a short chain. Hence $M_P=0$ or $M_Q=0$.

Assume now that $B$  is a wild concealed algebra. Fix a regular component $\mathcal{D}$ of $\Gamma_{B}$. Invoking
Proposition \ref{prop 3.4} we conclude  that there exists a module $X \in \mathcal{D}$  such that  $\Hom_B(M',\tau_BX)\neq 0$
and $\Hom_B(X, M'') \neq 0$. Thus $M$ is the middle of a short chain $X \rightarrow M \rightarrow \tau_BX$ in $\mod B$.
Hence, we get that $M_P=0$ or $M_Q=0$.

Therefore, we obtain that $M$ belongs to $\add(\mathcal{C})$ for a connecting component $\mathcal{C} =\mathcal{P}(B)$
or $\mathcal{C}=\mathcal{Q}(B)$.
\end{proof}

We shall now be concerned with the situation of exactly one
connecting component in the Auslander-Reiten quiver of a tilted
algebra. Let $H$  be an indecomposable hereditary algebra of
infinite representation type, $T$  a tilting module in $\mod H$
and $B=\End_H(T)$ the associated tilted algebra. By
$\mathcal{C}_T$ we denote the connecting component in $\Gamma_B$
determined by $T$. We keep these notations to formulate and prove
the following statement.

\begin{prop}\label{prop 4.2}
Let $B=\End_H(T)$ be an indecomposable tilted algebra which is not
concealed. If $M$ is a sincere $B$-module  which is not  the
middle of a short chain in $\mod B$, then $M \in
\add(\mathcal{C}_T)$.
\end{prop}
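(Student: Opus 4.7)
The plan is to argue by contradiction: assume $M$ has an indecomposable direct summand $M'\notin\mathcal{C}_T$ and exhibit a short chain $X\to M\to \tau_BX$ in $\mod B$. Because the torsion pair $(\mathcal{Y}(T),\mathcal{X}(T))$ in $\mod B$ is splitting, $M'$ belongs to $\mathcal{Y}(T)$ or $\mathcal{X}(T)$; I treat only the first case, the second being handled dually. So $M'=\Hom_H(T,N)$ for a unique indecomposable $N\in\mathcal{T}(T)$, and I let $\mathcal{D}_H$ be the component of $\Gamma_H$ containing $N$.

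My first task is to show that $\mathcal{D}_H$ is a regular component of $\Gamma_H$ lying entirely in $\mathcal{T}(T)$. If $\mathcal{D}_H$ met $\mathcal{F}(T)$ nontrivially, the Brenner--Butler formalism would glue $\Hom_H(T,\mathcal{D}_H\cap\mathcal{T}(T))$ and $\Ext^1_H(T,\mathcal{D}_H\cap\mathcal{F}(T))$ into a single connected subquiver of $\Gamma_B$ meeting both $\mathcal{Y}(T)$ and $\mathcal{X}(T)$, which can only be $\mathcal{C}_T$, contradicting $M'\notin\mathcal{C}_T$. The case $\mathcal{D}_H=\mathcal{Q}(H)$ is eliminated similarly: then $\Hom_H(T,\mathcal{Q}(H))$ is a connected component of $\Gamma_B$ containing the canonical section $\Delta_T$, hence coincides with $\mathcal{C}_T$. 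Finally, if $\mathcal{D}_H=\mathcal{P}(H)$, the vanishing $\Ext^1_H(T,P)=0$ for all preprojective $P$ combined with the Auslander--Reiten formula $\Ext^1_H(T_i,P)\cong D\Hom_H(P,\tau_HT_i)$, applied to each non-projective summand $T_i$ of $T$, yields $\Hom_H(P,\tau_HT_i)=0$ for every preprojective $P$. Because every nonzero $H$-module receives a nonzero map from its projective cover (a preprojective module), this forces $\tau_HT_i=0$, so each $T_i$ is projective; hence $T\cong H$ and $B\cong H$ is a concealed algebra, contrary to the hypothesis.

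With $\mathcal{D}_H$ identified as a regular component of $\Gamma_H$ inside $\mathcal{T}(T)$, the translation $\tau_H$ preserves $\mathcal{D}_H$, and the Brenner--Butler equivalence $\Hom_H(T,-)\colon\mathcal{T}(T)\to\mathcal{Y}(T)$ carries $\mathcal{D}_H$ to a component $\mathcal{D}_B$ of $\Gamma_B$ of the same shape, with $\tau_B\Hom_H(T,-)=\Hom_H(T,\tau_H-)$ on it. If $H$ is tame Euclidean, $\mathcal{D}_B$ is a stable tube and Lemma~\ref{lem 2.4} applied to $M'\in\mathcal{D}_B$ provides a short chain in $\mod B$ with $M'$ in the middle. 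If $H$ is wild, then $H$ itself is a wild concealed algebra and $\mathcal{D}_H$ is a regular component, so Lemma~\ref{lem 3.6} furnishes a short chain $X_H\to N\to\tau_HX_H$ in $\mod H$ with $X_H\in\mathcal{D}_H$; applying $\Hom_H(T,-)$ transfers it to a short chain $\Hom_H(T,X_H)\to M'\to\tau_B\Hom_H(T,X_H)$ in $\mod B$. In either case, composing with the inclusion $M'\hookrightarrow M$ and the projection $M\twoheadrightarrow M'$ yields a short chain with $M$ in the middle, the required contradiction.

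The main obstacle I anticipate is the gluing step in the second paragraph---controlling how a component of $\Gamma_H$ straddling $\mathcal{T}(T)$ and $\mathcal{F}(T)$ is absorbed into $\mathcal{C}_T$ under the Brenner--Butler functors---together with the Auslander--Reiten argument that eliminates the preprojective case $\mathcal{D}_H=\mathcal{P}(H)$. Once $\mathcal{D}_H$ is pinned down, the remainder amounts to invoking Lemmas~\ref{lem 2.4} and~\ref{lem 3.6} and carrying short chains across the tilting equivalence.
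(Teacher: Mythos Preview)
Your gluing argument in the second paragraph does not hold. The Brenner--Butler functors $\Hom_H(T,-)$ and $\Ext^1_H(T,-)$ do \emph{not} send the two pieces $\mathcal{D}_H\cap\mathcal{T}(T)$ and $\mathcal{D}_H\cap\mathcal{F}(T)$ of a component $\mathcal{D}_H$ of $\Gamma_H$ into a single component of $\Gamma_B$. The only almost split sequences in $\mod B$ linking $\mathcal{Y}(T)$ to $\mathcal{X}(T)$ are the connecting sequences starting at the modules $\Hom_H(T,I)$ on $\Delta_T$; they involve the indecomposable injectives and projectives of $H$, not the boundary of an arbitrary component. Concretely, if $H$ is wild and $T$ has a regular summand $T_1$ lying in a regular component $\mathcal{D}_H$, then $\tau_HT_1\in\mathcal{F}(T)$ (since $\Ext^1_H(T_1,\tau_HT_1)\cong D\End_H(T_1)\neq 0$), so $\mathcal{D}_H$ meets $\mathcal{F}(T)$; yet $\Hom_H(T,\mathcal{D}_H\cap\mathcal{T}(T))$ sits in a component of $\Gamma_B$ obtained from $\mathbb{ZA}_{\infty}$ by ray insertions, which is \emph{not} the connecting component. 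Likewise $\mathcal{P}(H)$ meets $\mathcal{F}(T)$ as soon as $T$ has a non-projective preprojective summand, but $\Hom_H(T,\mathcal{P}(H)\cap\mathcal{T}(T))$ is a preprojective component of $\Gamma_B$, again not $\mathcal{C}_T$. So your reduction to ``$\mathcal{D}_H$ regular and contained in $\mathcal{T}(T)$'' fails exactly in the cases that matter.

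There is also a structural reason your strategy cannot succeed as stated: you are trying to show that $M'$ \emph{alone} is the middle of a short chain, but when $M'$ lies in a preprojective component $\mathcal{P}(C^{(l)}_i)$ of $\Gamma_B$ (which is what happens when $N\in\mathcal{P}(H)\cap\mathcal{T}(T)$), $M'$ is directing and need not be the middle of any short chain by itself. The paper's proof is substantially different: it invokes the Kerner--Liu--Strauss structure theory to place $M'$ in some $\mathcal{Y}\Gamma_{B^{(l)}_i}$, and then---crucially---uses that the sincere module $M$ is \emph{faithful} (hence $P\hookrightarrow M^r$ for every indecomposable projective $P$) to manufacture a short chain $Z\to M\to\tau_BZ$ with $Z$ chosen in the regular part of $\Gamma_{B^{(l)}_i}$ via Lemma~\ref{lem 2.3}, Proposition~\ref{prop 3.4} or Theorem~\ref{thm 3.5}. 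The map $Z\to M$ factors through a projective module and lands in summands of $M$ other than $M'$. Your argument never uses faithfulness or the remaining summands of $M$, and without them the contradiction cannot be obtained.
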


\begin{proof}
We start with the general view on the module category $\mod B$ due
to results established in \cite{K1}, \cite{K2}, \cite{K3},
\cite{L1}, \cite{St}. Let $\Delta= \Delta_T$ be the canonical
section of the connecting component $\mathcal{C}_T$ of $\Gamma_B$
determined by $T$. Hence, $\Delta= Q^{\op}$ for $Q=Q_H$. Then
$\mathcal{C}_T$ admits a finite (possibly empty) family of
pairwise disjoint full translation (valued) subquivers
\[\mathcal{D}^{(l)}_1, ..., \mathcal{D}^{(l)}_m, \mathcal{D}^{(r)}_1, ..., \mathcal{D}^{(r)}_n\]
such that  the following statements hold:
\begin{itemize}
\item[(a)] for each $i \in \{1,...,m\}$, there is an isomorphism of translation quivers $\mathcal{D}^{(l)}_i
\cong \mathbb{N} \Delta^{(l)}_i$, where $\Delta^{(l)}_i$ is a
connected full valued subquiver of $\Delta$, and
$\mathcal{D}^{(l)}_i$ is closed under predecessors in
$\mathcal{C}_T$;
\item[(b)] for each $j \in \{1,...,n\}$, there is an isomorphism of translation quivers $\mathcal{D}^{(r)}_j
\cong (-\mathbb{N}) \Delta^{(r)}_j$, where $\Delta^{(r)}_j$ is a
connected full valued subquiver of $\Delta$, and
$\mathcal{D}^{(r)}_j$ is closed under successors in
$\mathcal{C}_T$;
\item[(c)] all but finitely many indecomposable modules of $\mathcal{C}_T$ lie in
\[\mathcal{D}^{(l)}_1 \cup ...\cup \mathcal{D}^{(l)}_m \cup \mathcal{D}^{(r)}_1 \cup ...\cup \mathcal{D}^{(r)}_n;\]
\item[(d)] for each $i \in \{1,...,m\}$, there exists a tilted algebra $B^{(l)}_i=\End_{H^{(l)}_i}(T^{(l)}_i)$,
where $H^{(l)}_i$ is a hereditary algebra of type $(\Delta^{(l)}_i)^{\op}$ and $T^{(l)}_i$ is a
tilting $H^{(l)}_i$-module without preinjective indecomposable direct summands such that
\begin{itemize}
\item[$\bullet$] $B^{(l)}_i$ is a quotient algebra of $B$, and hence there is a fully faithful embedding
$\mod B^{(l)}_i \hookrightarrow \mod B$,
\item[$\bullet$] $\mathcal{D}^{(l)}_i$ coincides with the torsion-free part $\mathcal{Y}(T^{(l)}_i) \cap
\mathcal{C}_{T^{(l)}_i}$ of the connecting component
$\mathcal{C}_{T^{(l)}_i}$ of $\Gamma_{B^{(l)}_i}$ determined by
$T^{(l)}_i$;
\end{itemize}
\item[(e)]for each $j \in \{1,...,n\}$, there exists a tilted algebra $B^{(r)}_j=\End_{H^{(r)}_j}(T^{(r)}_j)$,
where $H^{(r)}_j$ is a hereditary algebra of type $(\Delta^{(r)}_j)^{\op}$ and $T^{(r)}_j$ is a
tilting $H^{(r)}_j$-module without preprojective indecomposable direct summands such that
\begin{itemize}
\item[$\bullet$] $B^{(r)}_j$ is a quotient algebra of $B$, and hence there is a fully faithful embedding
$\mod B^{(r)}_j \hookrightarrow \mod B$,
\item[$\bullet$] $\mathcal{D}^{(r)}_j$ coincides with the torsion part $\mathcal{X}(T^{(r)}_j) \cap
\mathcal{C}_{T^{(r)}_j}$ of the connecting component
$\mathcal{C}_{T^{(r)}_j}$ of $\Gamma_{B^{(r)}_j}$ determined by
$T^{(r)}_j$;
\end{itemize}
\item[(f)] $\mathcal{Y}(T)=\add (\mathcal{Y}(T^{(l)}_1) \cup ... \cup \mathcal{Y}(T^{(l)}_m)\cup (\mathcal{Y}(T)
\cap \mathcal{C}_T))$;
\item[(g)] $\mathcal{X}(T)=\add ((\mathcal{X}(T) \cap \mathcal{C}_T)\cup \mathcal{X}(T^{(r)}_1) \cup ... \cup
\mathcal{X}(T^{(r)}_n))$;
\item[(h)] the Auslander-Reiten quiver $\Gamma_B$ has the disjoint union form
\[\Gamma_B = (\bigcup_{i=1}^m \mathcal{Y}\Gamma_{B^{(l)}_i}) \cup \mathcal{C}_T \cup  (\bigcup_{j=1}^n
\mathcal{X}\Gamma_{B^{(r)}_j}),\]
where
\begin{itemize}
\item[$\bullet$] for each $i \in \{1,...,m\}$, $\mathcal{Y}\Gamma_{B^{(l)}_i}$ is the union of all components
of $\Gamma_{B^{(l)}_i}$ contained entirely in $\mathcal{Y}(T^{(l)}_i)$,
\item[$\bullet$] for each $j \in \{1,...,n\}$, $\mathcal{X}\Gamma_{B^{(r)}_j}$ is the union of all components
of $\Gamma_{B^{(r)}_j}$ contained entirely in
$\mathcal{X}(T^{(r)}_j)$.
\end{itemize}
\end{itemize}
Moreover, we have the following description of the components of
$\Gamma_B$ contained in the parts $\mathcal{Y}\Gamma_{B^{(l)}_i}$
and $\mathcal{X}\Gamma_{B^{(r)}_j}$.
\begin{itemize}
\item[(1)] If $\Delta^{(l)}_i$ is a Euclidean quiver, then $\mathcal{Y}\Gamma_{B^{(l)}_i}$ consists
of a unique preprojective component $\mathcal{P}({B^{(l)}_i})$ of
$\Gamma_{{B^{(l)}_i}}$ and an infinite family
$\mathcal{T}^{B^{(l)}_i}$ of pairwise orthogonal generalized
standard ray tubes. Further, $\mathcal{P}({B^{(l)}_i})$ coincides
with the preprojective component $\mathcal{P}({C^{(l)}_i})$ of a
tame concealed quotient algebra $C^{(l)}_i$ of $B^{(l)}_i$.
\item[(2)] If $\Delta^{(l)}_i$ is a wild quiver, then $\mathcal{Y}\Gamma_{B^{(l)}_i}$ consists of
a unique preprojective component $\mathcal{P}(B^{(l)}_i)$ of
$\Gamma_{B^{(l)}_i}$ and an infinite family of components obtained
from the components of the form $\mathbb{ZA}_{\infty}$ by a finite
number (possibly empty) of ray insertions. Further,
$\mathcal{P}(B^{(l)}_i)$ coincides with the preprojective
component $\mathcal{P}(C^{(l)}_i)$ of a wild concealed quotient
algebra $C^{(l)}_i$ of $B^{(l)}_i$.
\item[(3)] If $\Delta^{(r)}_j$ is a Euclidean quiver, then $\mathcal{X}\Gamma_{B^{(r)}_j}$ consists
of a unique preinjective component $\mathcal{Q}(B^{(r)}_j)$ of
$\Gamma_{B^{(r)}_j}$ and an infinite family
$\mathcal{T}^{B^{(r)}_j}$ of pairwise orthogonal generalized
standard coray tubes. Further, $\mathcal{Q}(B^{(r)}_j)$ coincides
with the preinjective component $\mathcal{Q}(C^{(r)}_j)$ of a tame
concealed quotient algebra $C^{(r)}_j$ of $B^{(r)}_j$.
\item[(4)] If $\Delta^{(r)}_j$ is a wild quiver, then $\mathcal{X}\Gamma_{B^{(r)}_j}$ consists of
a unique preinjective component $\mathcal{Q}(B^{(r)}_j)$ of $\Gamma_{B^{(r)}_j}$ and an infinite family of
components obtained from the components of the form $\mathbb{ZA}_{\infty}$ by a finite number (possibly
empty) of coray insertions. Further, $\mathcal{Q}(B^{(r)}_j)$ coincides with the preinjective component
$\mathcal{Q}(C^{(r)}_j)$ of a wild concealed quotient algebra $C^{(r)}_j$ of $B^{(r)}_j$.
\end{itemize}

Observe that each indecomposable $B$-module belongs either to
$\mathcal{Y}(T)$ or $\mathcal{X}(T)$ ($T$ is a splitting tilting
module). Let $M'$  be an indecomposable direct summand of $M$,
which is contained in $\mathcal{Y}(T)$. We claim that then $M'$
belongs to $\mathcal{Y}(T) \cap \mathcal{C}_T$. Conversely, assume
that  $M' \in \mathcal{Y}(T)\backslash \mathcal{C}_T$. Then there
exists $i \in \{1,..., m\}$  such that $M' \in
\mathcal{Y}\Gamma_{B^{(l)}_i} \backslash \mathcal{C}_T$,
equivalently $M' \in \mathcal{Y}\Gamma_{B^{(l)}_i} \backslash
\mathcal{C}_{T^{(l)}_i}$. Without loss of generality we may assume
that $i=1$. Since $T^{(l)}_1$ does not contain indecomposable
preinjective direct summands,
we may distinguish two cases.\\

Assume first that $T^{(l)}_1$ contains an indecomposable direct
summand from $\mathcal{R}(H^{(l)}_1)$. This implies that there is
a projective module $P$ in $\mathcal{Y}\Gamma_{B^{(l)}_1}$ which
does not belong to $\mathcal{P}(B^{(l)}_1)$. If $B_{1}^{(l)}$ is a
tilted algebra of Euclidean type, then $P$ is a module from some
ray tube $\mathcal{T}$. Then, according to Lemma \ref{lem 2.2},
$\Hom_{B^{(l)}_1}(M',X)=0$ for any $X \in \mathcal{T}$, which
leads to conclusion that $M' \notin \mathcal{P}(B^{(l)}_1)$,
because $\mathcal{T}$ belongs to the family
$\mathcal{T}^{B^{(l)}_i}$ of ray tubes separating
$\mathcal{P}(B^{(l)}_i)$ from the preinjective component
$\mathcal{Q}(B^{(l)}_i)$ of $\Gamma_{B^{(l)}_i}$. Since $M'$ does
not belong to an infinite family of ray tubes (Lemmas \ref{lem
2.2} and \ref{lem 2.4}), by (1) we conclude that $M'=0$, a
contradiction. This shows that any indecomposable direct summand
of $M$ from $\mathcal{Y}(T)$ is contained in $\mathcal{Y}(T) \cap
\mathcal{C}_T$. If $B^{(l)}_1$ is a tilted algebra of wild type,
then $P$ belongs to a component obtained from a component of type
$\mathbb{ZA}_{\infty}$, say $\mathcal{D}$, by a positive number of
ray insertions. Then there is a left cone $(\rightarrow N)$ in
$\mathcal{D}$ which consists only of $C^{(l)}_1$-modules
\cite[Theorem 1]{K2}. Moreover,
$\tau_{C^{(l)}_1}V=\tau_{B^{(l)}_1}V$ for any module $V \in
(\rightarrow N)$ and there is an indecomposable module $Y \in
\mathcal{R}(H^{(l)}_1)$ such that $N=\Hom_{H^{(l)}_1}(T^{(l)}_1,
Y)$. Because $M' \in \mathcal{Y}\Gamma_{B^{(l)}_1} \backslash
\mathcal{C}_{T^{(l)}_1}$, we have also that $M' =
\Hom_{H^{(l)}_1}(T^{(l)}_1, X)$ for some $X \in
\mathcal{P}(H^{(l)}_1) \cup \mathcal{R}(H^{(l)}_1)$.

Suppose that $X \in \mathcal{R}(H^{(l)}_1)$. Invoking Theorem
\ref{thm 3.5}, we have that there exists a positive integer $t$
such that $\Hom_{H^{(l)}_1}(X, \tau^p_{H^{(l)}_1}Y)\neq 0$ for all
integers $p \geq t$. This implies that also
$\Hom_{B^{(l)}_1}(M',\tau_{B^{(l)}_1}^pN) \neq 0$ for all integers
$p \geq t$. Moreover, if $X \in \mathcal{P}(H^{(l)}_1)$, then
$M'=\Hom_{H^{(l)}_1}(T^{(l)}_1,X)$ belongs to
$\mathcal{P}(B^{(l)}_1)$, which is equal to
$\mathcal{P}(C^{(l)}_1)$. From Proposition \ref{prop 3.4} we
obtain now that there exists a positive integer $t$ such that
$\Hom_{B^{(l)}_1} (M', \tau_{B^{(l)}_1}^pN) \neq 0$ for all
integers $p \geq t$. Thus we have, independently on the position
of $X$ in $\mathcal{P}(H^{(l)}_1) \cup \mathcal{R}(H^{(l)}_1)$, a
nonzero homomorphism $g: M' \rightarrow \tau^p_{B^{(l)}_1}N$, for
any integer  $p \geq t$. Observe also that $M$ is a faithful
$B$-module because $M$ is sincere and not the middle of a short
chain (see \cite[Corollary 3.2]{RSS}). Hence there is a
monomorphism $B_B\to M^r$, for some positive integer $r$, so we
have a monomorphism $P\to M^r$, because $P$ is a direct summand of
$B_B$. Further, since $\mathcal{D}$ contains a finite number of
projective modules, we may assume, without loss of generality,
that $P$ is the one whose radical has an indecomposable direct
summand $L$ such that $\tau^s_{B^{(l)}_1}L \neq 0$ for any integer
$s \geq 1$. Consider the infinite sectional path $\Sigma$ in
$\mathcal{D}$ which terminates at $L$. Then there exists an
integer $p \geq t$ such that the infinite sectional path $\Omega$
which starts at $\tau_{B^{(l)}_1}^p N$ contains a module
$\tau_{B^{(l)}_1}Z$ with $Z$ lying on $\Sigma$. Then,
$\Hom_{B^{(l)}_1}(Z, L) \neq 0$, by Lemma \ref{lem 2.5}, and hence
$\Hom_{B^{(l)}_1}(Z, M) \neq 0$, since there are monomorphisms $L
\rightarrow P$ and $P \rightarrow M^r$ for some integer $r \geq
1$. Similarly, we obtain $\Hom_{B^{(l)}_1}(M', \tau_{B^{(l)}_1}Z)
\neq 0$, because there are a nonzero homomorphism $g: M'\to
\tau^p_{B^{(l)}_1}N$ and a monomorphism from $\tau_{B^{(l)}_1}^pN$
to $\tau_{B^{(l)}_1}Z$ being a composition of irreducible
monomorphisms. Finally, we get a short chain $Z \rightarrow M
\rightarrow \tau_{B^{(l)}_1}Z$ in $\mod B$, which contradicts
the assumption imposed on $M$. \\

Assume now that $T^{(l)}_1$ belongs to
$\add(\mathcal{P}(H^{(l)}_1))$. Then $B^{(l)}_1$ is a concealed
algebra and $B \neq B^{(l)}_1$, since $B$ is not concealed by the
assumption. Therefore, since $B$ is indecomposable, there exists a
module $R \in \mathcal{Q}(B^{(l)}_1)$, more precisely, a module
$R\in  \mathcal{Q}(B^{(l)}_1) \cap \mathcal{C}_{T}$ such that $W$
is a direct summand of $\rad P$ of some projective $B$-module $P$.
Moreover, by Lemmas \ref{lem 2.4} and \ref{lem 3.6}, we obtain
that $M' \in \mathcal{Y}\Gamma_{B^{(l)}_1}\backslash
\mathcal{C}_{T^{(l)}_1}$ implies $M' \in \mathcal{P}(B^{(l)}_1)$.
We claim that there exists $Z \in \mathcal{R}(B^{(l)}_1)$ such
that $\Hom_{B^{(l)}_1}(Z, W) \neq 0$ and $\Hom_{B^{(l)}_1}(M',
\tau_{B^{(l)}_1}Z) \neq 0$.

If $B^{(l)}_1$ is of Euclidean type then the claims follows from
Lemma \ref{lem 2.3}. Suppose now that $B^{(l)}_1$ is of wild type.
Let $\mathcal{D}$ be a fixed component in
$\mathcal{R}(B^{(l)}_1)$. From Proposition \ref{prop 3.4} we know
that there are nonzero homomorphisms $M' \rightarrow U$ for almost
all $U \in \mathcal{D}$. Also by this proposition, for almost all
$U \in \mathcal{D}$, there is a nonzero homomorphism $U
\rightarrow W$. Thus, we conclude that there exists a regular
$B^{(l)}_1$-module $Z$ such that $\Hom_{B^{(l)}_1}(Z, W) \neq 0$
and $\Hom_{B^{(l)}_1}(M', \tau_{B^{(l)}_1}Z) \neq 0$. Combining
now a nonzero homomorphism from $Z$ to $W$ with the composition of
monomorphisms $W \rightarrow P$ and $P \rightarrow M^r$, for some
integer $r\geq 1$, we obtain that $\Hom_{B^{(l)}_1}(Z, M) \neq 0$.
Consequently,
there is  a short chain $Z \rightarrow M \rightarrow \tau_{B^{(l)}_1}Z$ in $\mod B$, a contradiction.\\

We use dual arguments to show that any indecomposable direct summand $M''$ of $M$, which is contained in
$\mathcal{X}(T)$, belongs in fact to $\mathcal{X}(T) \cap \mathcal{C}_T$.
\end{proof}

\vspace{0,2cm}
\begin{prop}\label{prop 4.3}
Let $B=\End_H(T)$ be an indecomposable tilted algebra,
$\mathcal{C}_T$ the connecting component of $\Gamma_B$ determined
by $T$, and $M$ a sincere module in $\add(\mathcal{C}_T)$ which is
not the middle of a short chain. Then there is a section $\Delta$
in $\mathcal{C}_T$ such that every indecomposable direct summand
of $M$ belongs to $\Delta$.
\end{prop}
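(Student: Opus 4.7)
The plan is to exploit the non-middle-of-short-chain condition to extract Hom-vanishing relations among the pairwise nonisomorphic indecomposable direct summands $M_1, \dots, M_k$ of $M$, and then use these relations to place all of them on a common section of $\mathcal{C}_T$. Since each $M_i$ is a direct summand of $M$, $\Hom_B(M_i, M) \neq 0$, so the short-chain hypothesis (applied with $X = M_i$) forces $\Hom_B(M, \tau_B M_i) = 0$; in particular
\[ \Hom_B(M_j, \tau_B M_i) = 0 \qquad \text{for all } i,j. \]
This is the driving relation for what follows.

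I would next show that $M_1, \dots, M_k$ lie on pairwise distinct $\tau_B$-orbits of $\mathcal{C}_T$. Suppose for contradiction that $M_j = \tau_B^t M_i$ for some $t \geq 1$. For $t = 1$ one has $\Hom_B(M_j, \tau_B M_i) = \End_B(\tau_B M_i) \neq 0$, immediately contradicting the driving relation. For $t \geq 2$, the component $\mathcal{C}_T$ (being essentially of the form $\mathbb{Z}\Delta_T$ away from the finitely many projective and injective vertices) admits a sectional path from $\tau_B^t M_i$ to $\tau_B M_i$, and Lemma \ref{lem 2.5} then yields $\Hom_B(\tau_B^t M_i, \tau_B M_i) \neq 0$, again a contradiction.

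Third, I would show that whenever $M_i$ is a predecessor of $M_j$ in $\mathcal{C}_T$, $M_i$ is in fact a sectional predecessor, i.e.\ $M_i \in \mathcal{S}_{M_j}$. Suppose otherwise. Applying Proposition \ref{prop 2.6}(i) (with the hypothesis on injective nonsectional predecessors of $M_j$ arranged via the sincerity of $M$ and the structural location of the injectives in $\mathcal{C}_T$), one obtains some $U \in \mathcal{S}_{M_j}$ with $\Hom_B(M_i, \tau_B U) \neq 0$. Since $U$ is a sectional predecessor of $M_j$, Lemma \ref{lem 2.5} produces a nonzero map $U \to M_j$, hence $\Hom_B(U, M) \neq 0$. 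The short-chain hypothesis applied with $X = U$ then forces $\Hom_B(M, \tau_B U) = 0$, contradicting $\Hom_B(M_i, \tau_B U) \neq 0$. A dual argument using Proposition \ref{prop 2.6}(ii) handles successor relations, showing every successor $M_j$ of $M_i$ among the summands is a sectional successor.

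Finally, I would construct the section $\Delta$. The $M_i$ occupy distinct $\tau_B$-orbits, and all predecessor/successor relations among them are sectional; this says the full subquiver of $\mathcal{C}_T$ spanned by $\{M_1, \dots, M_k\}$ is a consistent partial assignment of heights to vertices of $\Delta_T$. Starting from the canonical section $\Delta_T$ and performing a sequence of Auslander--Platzeck--Reiten reflections at sources or sinks to raise or lower heights as required, one obtains a section $\Delta$ of $\mathcal{C}_T$ containing every $M_i$. I expect the principal obstacle to be precisely this last step: checking that the constraints extracted above are not merely necessary but can be simultaneously realized on a single connected, convex, acyclic transversal of $\tau_B$-orbits. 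This will require careful combinatorial bookkeeping inside $\mathcal{C}_T$, together with the structural decomposition of connecting components of tilted algebras recorded in items (a)--(h) of the proof of Proposition \ref{prop 4.2}.
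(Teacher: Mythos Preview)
Your outline reaches the same endpoint as the paper—shift $\Delta_T$ by reflections until it contains every $M_i$—but steps 2 and 3 do not work as written, and the real gap is not merely in the final bookkeeping you flag.

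In step 2, for $t\geq 2$ there is no sectional path from $\tau_B^t M_i$ to $\tau_B M_i$: a sectional path in an acyclic translation quiver meets each $\tau_B$-orbit at most once, yet both endpoints lie in the orbit of $M_i$. More seriously, step 3 has two defects. First, applying Proposition~\ref{prop 2.6}(i) with $W=M_j$ requires that $M_j$ have no injective nonsectional predecessor in $\mathcal{C}_T$, and this is not a general structural fact about connecting components—it is precisely what must be proved. Second, even granting the application, the conclusion $M_i\in\mathcal{S}_{M_j}$ is too weak: a section is convex in $\mathcal{C}_T$, so if even one path from $M_i$ to $M_j$ is nonsectional they cannot lie on a common section, and when the underlying graph of $\Delta_T$ contains a cycle the existence of a sectional path from $X$ to $Y$ does not exclude nonsectional ones. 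The paper therefore proves the stronger statement that \emph{every} path in $\mathcal{C}_T$ between two indecomposable summands of $M$ is sectional (which also absorbs your step 2). It does so by choosing a hypothetical nonsectional path with maximal sectional tail and manufacturing a short chain; but this argument needs a preparatory step in which Proposition~\ref{prop 2.6} is applied with $W$ an immediate predecessor of a projective module, where the injective-predecessor hypothesis \emph{is} available from the structure of $\mathcal{C}_T$. Faithfulness of $M$ then extends ``every path sectional'' to paths from summands of $M$ to projectives and from injectives to summands of $M$; these are exactly the facts that make the reflection procedure legitimate, ensuring each reflection avoids projectives, injectives, and the remaining $M_k$.
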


\begin{proof}
We divide the proof into several steps.

(1) Let $M'$ be an indecomposable direct summand of $M$ and $R$ be
an immediate predecessor of some projective module $P$ in
$\mathcal{C}_T$ (if $\mathcal{C}_T$ contains a projective module).
We prove that, if $M'$ is a predecessor of $R$ in $\mathcal{C}_T$,
then $M'$ belongs to $\mathcal{S}_R$. Assume that $M'$ is a
predecessor of $R$ in $\mathcal{C}_T$ and $M'$ does not belong to
$\mathcal{S}_R$. Since $R$ has no injective nonsectional
predecessors in $\mathcal{C}_T$, we have from Proposition
\ref{prop 2.6} (i) that $\Hom_B(M',\tau_BU)\neq 0$ for some module
$U\in\mathcal{S}_R$. Moreover, $\Hom_B(U,R)\neq 0$, because there
is a sectional path from $U$ to $R$ in $\mathcal{C}_T$. Since $M$
is faithful, there is a monomorphism $B_B\to M^r$, for some
positive integer $r$, so we have a monomorphism $P\to M^r$,
because $P$ is a direct summand of $B_B$. Combining now a nonzero
homomorphism from $U$ to $R$ with the composition of monomorphisms
$R\to P$ and $P\to M^r$, we obtain $\Hom_B(U,M^r)\neq 0$, and
hence $\Hom_B(U,M)\neq 0$. Summing up, we have in $\mod B$ a short
chain $U\to M\to \tau_BU$, a contradiction.

Dually, using Proposition \ref{prop 2.6}(ii) we show that, if an
indecomposable direct summand $M''$ of $M$ is a successor of an
immediate successor $J$ of some injective module $I$ in
$\mathcal{C}_T$,
then $M''$ belongs to $\mathcal{S}^*_J$. \\

%(2) Let $M'$ and $M''$ be nonisomorphic indecomposable direct
%summands of $M$ such that $M'$ is a predecessor of $M''$ in
%$\mathcal{C}_T$. Then there is a sectional path from $M'$ to $M''$
%in $\mathcal{C}_T$. Suppose it is not the case. Then $M'$ does not
%belong to $\mathcal{S}_{M''}$. Moreover, by (1), $M''$ has no
%injective nonsectional predecessors in $\mathcal{C}_T$. Therefore,
%applying Proposition \ref{prop 2.6} (i), we conclude that
%$\Hom_B(M',\tau_BU)\neq 0$ for some module $U$ in
%$\mathcal{S}_{M''}$. Since there is a sectional path in
%$\mathcal{C}_T$ from $U$ to $M''$, we get $\Hom_B(U,M'')\neq 0$.
%This leads to a short chain in $\mod B$ of the form $U \to M\to \tau_BU$, a contradiction. \\

(2) Let $M'$ and $M''$ be nonisomorphic indecomposable direct
summands of $M$ such that $M'$ is a predecessor of $M''$ in
$\mathcal{C}_T$. We show that every  path from $M'$ to $M''$ in
$\mathcal{C}_T$ is sectional. Assume for the contrary that there
exists a nonsectional path from $M'$ to $M''$ in $\mathcal{C}_T$.
For each nonsectional path $\sigma$ in $\mathcal{C}_T$ from $M'$
to $M''$, we denote by $n(\sigma)$ the length of the maximal
sectional subpath of $\sigma$ ending in $M''$. Among the
nonsectional paths in $\mathcal{C}_T$ from $M'$ to $M''$ we may
choose a path $\gamma$ with maximal $n(\gamma)$. Let $Y_0 \to Y_1
\to\cdots\to Y_{n-1} \to Y_n=M''$ be the maximal sectional subpath
of $\gamma$ ending in $M''$. Observe that then $\gamma$ admits a
subpath of the form $\tau_BY_1 \to Y_0 \to Y_1$, and so $Y_1$ is
not projective.

We show first that there is no sectional path in $\mathcal{C}_T$
from $M'$ to $Y_0$. Note that there is no sectional path in
$\mathcal{C}_T$ from $M'$ to $\tau_BY_1$. Indeed, otherwise
$\Hom_B(M',\tau_BY_1)\neq 0$ and clearly $\Hom_B(Y_1,M'')\neq 0$,
since there is a sectional path from $Y_1$ to $Y_n=M''$, and
consequently $M$ is the middle of a short chain $Y_1 \to M \to
\tau_BY_1$, a contradiction. Moreover, applying (1), we conclude
that $Y_0$ and $\tau_BY_1$ are not projective. We claim that
$\tau_BY_1$ is a unique immediate predecessor of $Y_0$ in
$\mathcal{C}_T$. Suppose that $Y_0$ admits an immediate
predecessor $L$ in $\mathcal{C}_T$ different from $\tau_BY_1$.
Since there is no sectional path in $\mathcal{C}_T$ from $M'$ to
$\tau_BY_1$, we conclude that $\gamma$ contains a subpath of the
form
\[ M'=N_0\to N_1\to\cdots\to N_s=\tau_BZ_1\to Z_0\to Z_1\to\cdots\to Z_{t-1}\to Z_t=\tau_BY_1. \]
Assume first that all modules $Z_2, \ldots, Z_{t-1}$ are nonprojective. Then there is in $\mathcal{C}_T$ a nonsectional path $\beta$ from $M'$ to $M''$
of the form
\[ M'=N_0\to N_1\to\cdots\to \tau_BZ_1\to \tau_BZ_2 \to\cdots\to \tau_BZ_t\to \tau_BY_0\to L\to \]
\[\to Y_0\to Y_1\to\cdots\to Y_n=M'' \]
with $n(\beta)=n(\gamma)+1$, a contradiction with the choice of $\gamma$. Assume now that one of the modules $Z_2, \ldots, Z_{t-1}$ is projective.
Choose $k\in\{2,\ldots,t-1\}$ such that $Z_k$ is projective but $Z_{k+1},\ldots, Z_{t-1}, Z_t$ are nonprojective. Then $\tau_BZ_{k+1}$ is an immediate
predecessor of $Z_k$ in $\mathcal{C}_T$ and hence, applying (1), we infer that there is a sectional path in $\mathcal{C}_T$ from $M'$ to $\tau_BZ_{k+1}$.
We obtain then a nonsectional path $\alpha$ in $\mathcal{C}_T$ of the form
\[ M'\to\cdots\to \tau_BZ_{k+1}\to\cdots\to \tau_BZ_t\to \tau_BY_0\to L\to Y_0\to Y_1\to\cdots\to Y_n=M'' \]
with $n(\alpha)=n(\gamma)+1$, again a contradiction with the
choice of $\gamma$. Summing up, we proved that $Y_0$, $Y_1$ are
nonprojective and $\tau_BY_1$ is a unique immediate predecessor of
$Y_0$ in $\mathcal{C}_T$. Hence every sectional path in
$\mathcal{C}_T$ from $M'$ to $Y_0$ passes through $\tau_BY_1$.
This proves our claim, because there is no sectional path in
$\mathcal{C}_T$ from $M'$ to $\tau_BY_1$.

Observe that $\Hom_B(Y_0,M)\neq 0$, since we have a sectional path
in $\mathcal{C}_T$ from $Y_0$ to the direct summand $M''$ of $M$.
Denote by $f$ a nonzero homomorphism in $\mod B$ from $Y_0$ to $M$
and consider a projective cover $g: P_B(Y_0) \to Y_0$ of $Y_0$ in
$\mod B$. Then $fg\neq 0$ and hence there exist an indecomposable
projective $B$-module $P$ and nonzero homomorphism $h: P\to Y_0$
such that $fh\neq 0$. Applying (1) and Proposition \ref{prop 2.6}
(ii), we conclude that $h$ factorizes through a module in
$\add(\tau^-_B\mathcal{S}^*_{M'})$. Then there exists a module $U$
in $\mathcal{S}^*_{M'}$ and a nonzero homomorphism $j: \tau^-_BU
\to Y_0$ such that $fj\neq 0$. Moreover, $\Hom_B(M',U)\neq 0$
because there is a sectional path from $M'$ to $U$ in
$\mathcal{C}_T$. Therefore, $M$ is the middle of a short chain
$\tau^-_BU \to M\to U$, with $U=\tau_B (\tau^-_BU)$, a contradiction. \\

(3) Let $M'$ be an indecomposable direct summand of $M$ which is a
predecessor of an indecomposable projective module $P$ in
$\mathcal{C}_T$. Then every path in $\mathcal{C}_T$ from $M'$ to
$P$ is sectional. Indeed, since $M$ is a faithful module in $\mod
B$, there is a monomorphism $B_B\to M^r$ in $\mod B$ for some
positive integer $r$, and hence $\Hom_B(P,M'')\neq 0$ for an
indecomposable direct summand $M''$ of $M$. Since $\mathcal{C}_T$
is a generalized standard component, we infer that then there is
in $\mathcal{C}_T$ a path from $P$ to $M''$. Therefore, any path
in $\mathcal{C}_T$ from $M'$ to $P$ is
a subpath of a path in $\mathcal{C}_T$ from $M'$ to $M''$, and so is sectional, by (2). \\

(4) Let $M''$ be an indecomposable direct summand of $M$ which is
a successor of an indecomposable injective module $I$ in
$\mathcal{C}_T$. Then
every path in $\mathcal{C}_T$ from $I$ to $M''$ is sectional. This follows by arguments dual to those applied in (3). \\

We denote by $\Delta_T$ the section of $\mathcal{C}_T$ given by
the images of a complete set of pairwise nonisomorphic
indecomposable injective $H$-modules via the functor
$\Hom_H(T,-): \mod H \to \mod B$. \\

(5) Let $M_1, M_2, \ldots, M_t$ be a complete set of pairwise
nonisomorphic indecomposable direct summands of $M$. We know that
for a given module $N$ in $\mathcal{C}_T$ there exists a unique
integer $r$ such that $\tau^r_BN\in\Delta_T$. Let $r_1, r_2,
\ldots, r_t$ be the unique integers such that
$\tau^{r_i}_BM_i\in\Delta_T$, for any $i\in\{1,\ldots,t\}$.
Observe that the modules $\tau^{r_1}_BM_1, \tau^{r_2}_BM_2,
\ldots, \tau^{r_t}_BM_t$ are pairwise different because, by (2),
every path in $\mathcal{C}_T$ from $M_i$ to $M_j$, with $i\neq j$
in $\{1,\ldots,t\}$, is sectional. We shall prove our claim by
induction on the number $s(\Delta_T)=\sum_{i=1}^t|r_i|$.

Assume $s(\Delta_T)=0$. Then, for any $i\in\{1,\ldots,t\}$, $M_i\in \Delta_T$ and there is nothing to show.

Assume $s(\Delta_T)\geq 1$. Fix $i\in\{1,\ldots,t\}$ with
$|r_i|\neq 0$. Assume that $r_i>0$, or equivalently,
$M_i\in\mathcal{C}_T\cap\mathcal{X}(T)$. Denote by
$\Sigma^{(i)}_T$ the set of all modules $X$ in $\Delta_T$ such
that there is a path in $\mathcal{C}_T$ of length greater than or
equal to zero from $X$ to $\tau_B^{r_i}M_i$. We note that every
path from a module $X$ in $\Sigma^{(i)}_T$ to $\tau_B^{r_i}M_i$ is
sectional, because $\Delta_T$ is convex in $\mathcal{C}_T$ and
intersects every $\tau_B$-orbit in $\mathcal{C}_T$ exactly once.
Further, by (2) and (4), no module in $\Sigma^{(i)}_T$ is a
successor of a module $M_j$ with $j\in\{1,\ldots,t\}\setminus
\{i\}$ nor an indecomposable injective module, because there is a
nonsectional path in $\mathcal{C}_T$ from $\tau_B^{r_i}M_i$ to
$M_i$. Consider now the full subquiver $\Delta^{(i)}_T$ of
$\mathcal{C}_T$ given by the modules from
$\tau^-_B(\Sigma^{(i)}_T)$ and $\Delta_T\setminus \Sigma^{(i)}_T$.
Then $\Delta^{(i)}_T$ is a section of $\mathcal{C}_T$ and
$s(\Delta^{(i)}_T)\leq s(\Delta_T)-1$.

Assume now $r_i<0$, or equivalently,
$M_i\in\mathcal{C}_T\cap\mathcal{Y}(T)$. Denote by
$\Omega^{(i)}_T$ the set of all modules $Y$ in $\Delta_T$ such
that there is a path in $\mathcal{C}_T$ of length greater than or
equal to zero from $\tau_B^{r_i}M_i$ to $Y$. It follows from (2)
and (3) that no module in $\Omega^{(i)}_T$ is a predecessor of a
module $M_j$ with $j\in\{1,\ldots,t\}\setminus \{i\}$ nor an
indecomposable projective module, because there is a nonsectional
path in $\mathcal{C}_T$ from $M_i$ to $\tau_B^{r_i}M_i$. Consider
now the full subquiver $\Delta^{(i)}_T$ of $\mathcal{C}_T$ given
by the modules from $\tau_B(\Omega^{(i)}_T)$ and
$\Delta_T\setminus \Omega^{(i)}_T$. Then $\Delta^{(i)}_T$ is a
section of $\mathcal{C}_T$ and $s(\Delta^{(i)}_T)\leq
s(\Delta_T)-1$.

Summing up, we obtain that there is a section $\Delta$ in
$\mathcal{C}_T$ containing all modules $M_1, M_2, \ldots, M_t$.
\end{proof}

We complete now the proof of Theorem \ref{thm 1.1}.\\

Let $B$ be an indecomposable tilted algebra and $M$ a sincere
module in $\mod B$ which is not the middle of a short chain in
$\mod B$. Applying Propositions \ref{prop 4.1} and \ref{prop 4.2},
we conclude that there exists a hereditary algebra $\overline{H}$
and a tilting module $\overline{T}$ in $\mod \overline{H}$ such
that $B=\End_{\overline{H}}(\overline{T})$ and $M$ is isomorphic
to a $B$-module $M_1^{n_1} \oplus ... \oplus M_t^{n_t}$ with
$M_1,..., M_t$ indecomposable modules in
$\mathcal{C}_{\overline{T}}$, for some positive integers
$n_1,..,n_t$. Further, it follows from Proposition \ref{prop 4.3}
that there is a section $\Delta$ in $\mathcal{C}_{\overline{T}}$
containing the modules $M_1, ..., M_t$. Denote by
$T^{\ast}_{\Delta}$ the direct sum of all indecomposable
$B$-modules lying on $\Delta$. Then it follows from Theorem
\ref{thm 3.3} that $H_{\Delta}=\End_B(T^{\ast}_{\Delta})$ is an
indecomposable hereditary algebra, $T_{\Delta}=D
(T^{\ast}_{\Delta})$ is a tilting module in $\mod H_{\Delta}$, and
the tilted algebra $B_{\Delta} =\End_{H_{\Delta}}(T_{\Delta})$ is
the basic algebra of $B$. Let $H=H_{\Delta}$. Then there exists a
tilting module $T$ in the additive category $\add(T_{\Delta})$ of
$T_{\Delta}$ in $\mod H =\mod H_{\Delta}$ such that
$B=\End_{H}(T)$, $\mathcal{C}_{\overline{T}}$ is the connecting
component $\mathcal{C}_T$ of $\Gamma_B$ determined by $T$, and
$\Delta$ is the section $\Delta_T$ of $\mathcal{C}_T$ given by the
images of a complete set of pairwise nonisomorphic indecomposable
injective $H$-modules via the functor $\Hom_{H}(T,-): \mod H
\rightarrow \mod B$. Since $M_1,...,M_t$ lie on
$\Delta=\Delta_{T}$, we conclude that there is an injective module
$I$ in $\mod H$ such that the right $B$-modules $M=M_1^{n_1}
\oplus ... \oplus M_t^{n_t}$ and $\Hom_{H}(T,I)$ are isomorphic.
This finishes the proof of Theorem \ref{thm 1.1}.\\

We provide now the proof of Corollary \ref{cor 1.2}.\\

Let $A$ be an algebra and $M$ a module in $\mod A$ which is not
the middle of a short chain. It follows from Theorem \ref{thm 1.1}
that there exists a hereditary algebra $H$ and a tilting module
$T$ in $\mod H$ such that the tilted algebra $B=\End_H(T)$ is a
quotient algebra of $A$ and $M$ is isomorphic to the right
$B$-module $\Hom_B(T,I)$. Further, the functor $\Hom_H(T,-): \mod
H \rightarrow \mod B$ induces an equivalence of the torsion part
$\mathcal{T}(T)$ of $\mod H$ with the torsion-free part
$\mathcal{Y}(T)$ of $\mod B$, and obviously $I$ belongs to
$\mathcal{T}(T)$. Then we obtain isomorphisms of algebras
\[\End_A(M) \cong \End_B(M) \cong \End_B(\Hom_H(T,I)) \cong \End_H(I).\]
Thus Corollary \ref{cor 1.2} follows from the following known
characterization of hereditary algebras (see \cite{KSZ} for more
general results in this direction).
\begin{prop}
Let $\Lambda$ be an algebra. The following conditions are
equivalent:
\begin{itemize}
\item[$(i)$] $\Lambda$ is a hereditary algebra;
\item[$(ii)$] $\End_{\Lambda}(P)$ is a hereditary algebra for any
projective module $P$ in $\mod \Lambda$;
\item[$(iii)$] $\End_{\Lambda}(I)$ is a hereditary algebra for any
injective  module $I$ in $\mod \Lambda$.
\end{itemize}
\end{prop}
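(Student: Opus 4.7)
The plan is to establish $(i)\Leftrightarrow (ii)$ directly and then obtain $(i)\Leftrightarrow (iii)$ from it by transporting everything through the standard duality $D=\Hom_R(-,E)$, using that hereditariness of an artin algebra is left-right symmetric.

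The implications $(ii)\Rightarrow (i)$ and $(iii)\Rightarrow (i)$ I would dispatch immediately: for (ii) apply the hypothesis to the free module $P=\Lambda_{\Lambda}$, observing that $\End_{\Lambda}(\Lambda_{\Lambda})\cong \Lambda$; for (iii) apply it to the injective cogenerator $I=D({}_{\Lambda}\Lambda)$, for which the duality gives $\End_{\Lambda}(D({}_{\Lambda}\Lambda))\cong \End_{\Lambda^{\op}}({}_{\Lambda}\Lambda)^{\op}\cong (\Lambda^{\op})^{\op}=\Lambda$.

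The substantive direction is $(i)\Rightarrow (ii)$. Fix $\Lambda$ hereditary, a projective module $P$ in $\mod \Lambda$, and set $B=\End_{\Lambda}(P)$. I would work with the functor $F=\Hom_{\Lambda}(P,-):\mod \Lambda \to \mod B$, which is exact because $P$ is projective and which, by the Yoneda lemma, restricts to an equivalence between $\add(P)$ and the category of finitely generated projective right $B$-modules. Given a finitely generated $B$-module $X$, any projective presentation of $X$ in $\mod B$ has, by this equivalence, the form $F(P_1)\xrightarrow{F(g)}F(P_0)\to X\to 0$ for some morphism $g:P_1\to P_0$ in $\add(P)$; exactness of $F$ then yields $\Omega_B X = F(\ker g)$. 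Because $\Lambda$ is hereditary, the image $\mathrm{im}\, g\subseteq P_0$ is projective (submodules of projectives over hereditary algebras are projective), so the short exact sequence $0\to \ker g\to P_1\to \mathrm{im}\, g\to 0$ splits. Consequently $\ker g$ is a direct summand of $P_1\in \add(P)$, hence itself in $\add(P)$, and $\Omega_B X = F(\ker g)$ is projective in $\mod B$. This forces every finitely generated $B$-module to have projective dimension at most one, proving that $B$ is hereditary.

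Finally, for $(i)\Rightarrow (iii)$, I would pass through the duality: $D$ sets up a bijection between injective $\Lambda$-modules and projective $\Lambda^{\op}$-modules, and for any injective $I\in \mod \Lambda$ one has $\End_{\Lambda}(I)\cong \End_{\Lambda^{\op}}(DI)^{\op}$. Since $\Lambda$ hereditary implies $\Lambda^{\op}$ hereditary, the already established implication $(i)\Rightarrow (ii)$ applied to $\Lambda^{\op}$ shows $\End_{\Lambda^{\op}}(DI)$ is hereditary, whence so is its opposite $\End_{\Lambda}(I)$. The main obstacle, and the place where hereditariness is genuinely used, is the splitting of the sequence $0\to \ker g\to P_1\to \mathrm{im}\, g\to 0$ in the $(i)\Rightarrow (ii)$ step: it is precisely this that makes $\add(P)$ closed under kernels of its internal morphisms and thereby converts the Yoneda equivalence into the desired statement about first syzygies over $B$.
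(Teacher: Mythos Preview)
The paper does not actually prove this proposition: it is stated as a ``known characterization of hereditary algebras'' with a reference to \cite{KSZ} for more general results, and no argument is given. So there is nothing to compare your proof against on the paper's side.

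Your argument is correct and self-contained. The reverse implications via $P=\Lambda_\Lambda$ and $I=D({}_\Lambda\Lambda)$ are fine; in the latter you are implicitly using that for artin algebras right and left global dimension coincide, so hereditariness passes to the opposite algebra. For $(i)\Rightarrow(ii)$ the key step---that $\add(P)$ is closed under kernels of morphisms in $\add(P)$ when $\Lambda$ is hereditary, because $\mathrm{im}\,g\subseteq P_0$ is projective and hence the sequence $0\to\ker g\to P_1\to\mathrm{im}\,g\to 0$ splits---is exactly the standard argument (this is the ``projectivization'' functor of Auslander). One cosmetic remark: calling the equivalence $\add(P)\simeq\mathrm{proj}\,B$ an instance of the Yoneda lemma is a slight abuse of language; it is rather the elementary fact that $\Hom_\Lambda(P,-)$ is fully faithful on $\add(P)$ and sends $P$ to $B_B$. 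Also, strictly speaking $\Omega_B X$ is only defined up to projective summands, so what you actually show is that the kernel of \emph{some} projective presentation of $X$ is projective, which is of course enough to conclude $\mathrm{pd}_B X\leq 1$. None of this affects the validity of the proof.
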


%%%%%%%%%%%%%%%%%%%%%%%%%%%%%%%%%%%%%%%%%%%%%%%%%%%%%%%%%%%%%%55555555555555555555555555555555555555555555555555555555555555555555555555555555555555555555555555555555555555

\section{\normalsize Examples}

In this section we exhibit examples of modules which are not the middle of short chains, illustrating Theorem \ref{thm 1.1}.\\

\noindent {\sc Example 5.1 \,} Let $K$ be a field, $n$ a positive
integer, $Q$ the quiver
\[
  \xymatrix@C=2pc@R=2pc{
1\ar[rrd]_{\alpha_1} & 2\ar[rd]^{\alpha_2} & \cdots & n-1\ar[ld]_{\alpha_{n-1}} & n\ar[lld]^{\alpha_n} \\
&&0
  }
\]
and $A=KQ$ the path algebra of $Q$ over $K$. Then the Auslander-Reiten quiver $\Gamma_A$ admits a unique preinjective component $\mathcal{Q}(A)$ whose
right part is of the form
\[
  \xymatrix@C=2pc@R=1pc{
{\phantom{III}}\ar[rdd]&&\tau_AI(1)\ar[rdd]&&I(1) \\
{\phantom{III}}\ar[rd]&&\tau_AI(2)\ar[rd]&&I(2) \\
\cdots&\tau_AI(0)\ar[ruu]\ar[ru]\ar[rdd]\ar[rd]&\vdots&I(0)\ar[ruu]\ar[ru]\ar[rdd]\ar[rd]&\vdots \\
{\phantom{III}}\ar[ru]&&\tau_AI(n-1)\ar[ru]&&I(n-1) \\
{\phantom{III}}\ar[ruu]&&\tau_AI(n)\ar[ruu]&&I(n)
  }
\]
where $I(0), I(1), I(2) \ldots, I(n-1), I(n)$ are the
indecomposable injective right $A$-modules at the vertices $0, 1,
2, \ldots, n-1, n$, respectively. Consider the semisimple module
$M=I(1)\oplus I(2)\oplus\ldots\oplus I(n-1)\oplus I(n)$ in $\mod
A$. Then $M$ is not the middle of a short chain in $\mod A$ and
$B=A/\ann_A(M)$ is the path algebra $K\Delta$ of the subquiver
$\Delta$ of $Q$ given by the vertices $1, 2, \ldots, n-1, n$,
which is isomorphic to the product of $n$ copies of $K$.
Observe also that the injective modules $I(0), I(1),..., I(n)$ form a section of $\mathcal{Q}(A)$.\\

\noindent {\sc Example 5.2 \,} Let $K$ be a field and $n$ be a
positive integer. For each $i\in\{1,\ldots,n\}$, choose a basic
indecomposable finite-dimensional hereditary $K$-algebra $H_i$, a
multiplicity-free tilting module $T_i$ in $\mod H_i$, and consider
the associated tilted algebra $B_i=\End_{H_i}(T_i)$, the
connecting component $\mathcal{C}_{T_i}$ of $\Gamma_{B_i}$
determined by $T_i$, and the module
$M_{T_i}=\Hom_{H_i}(T_i,D(H_i))$ whose indecomposable direct
summands form the canonical section $\Delta_{T_i}$ of
$\mathcal{C}_{T_i}$. It follows from general theory (\cite{K1},
\cite{St}) that the Auslander-Reiten quiver $\Gamma_{B_i}$
contains at least one preinjective component. Therefore, we may
choose, for any $i\in\{1,\ldots,n\}$, a simple injective right
$B_i$-module $S_i$ lying in a preinjective component
$\mathcal{Q}_i$ of $\Gamma_{B_i}$. Let $B=B_1\times\ldots\times
B_n$ and $S=S_1\oplus\ldots\oplus S_n$. Then $S$ is a
finite-dimensional $K$-$B$-bimodule, and we may consider the
one-point extension algebra
\[ A=\left[\begin{matrix}K&S\\ 0 & B\end{matrix}\right] = \left\{\left[\begin{matrix}\lambda &s\\0&b
\end{matrix}\right]\mid \,\,\lambda\in K,\,\, s\in S,\,\,b\in B\right\}. \]
Since $S$ is a semisimple injective module in $\mod B$, it follows
from general theory (see \cite[(2.5)]{Ri1} or \cite[(XV.1)]{SS2})
that, for any indecomposable module $X$ in $\mod A$ which is not
in $\mod B$, its radical $\rad X$ coincides with the largest right
$B$-submodule of $X$ and belongs to the additive category
$\add(S)$ of $S$. In particular, for any indecomposable module $Z$
in $\mod B$, the almost split sequence in $\mod B$ with the right
term $Z$ is an almost split sequence in $\mod A$. Therefore, the
Auslander-Reiten quiver $\Gamma_A$ of $A$ is obtained from the
disjoint union of the Auslander-Reiten quiver $\Gamma_{B_1}, ...,
\Gamma_{B_n}$ by glueing of the preinjective components
$\mathcal{Q}_1, ..., \mathcal{Q}_n$ into a one component by the
new indecomposable projective  $A$-module $P$ with $\rad P=S$ (and
possibly adding new components). This implies that the right
$B$-module
\[ M=M_{T_1}\oplus\ldots\oplus M_{T_n} \]
is not the middle of a short chain in $\mod A$. Moreover, since $M_{T_i}$ is a faithful right $B_i$-module for any $i\in\{1,\ldots,n\}$, we
conclude that $B=A/\ann_A(M)$.\\

\noindent {\sc Example 5.3 \,} Let $K$ be a field, $H$ be a basic
indecomposable finite-dimensional hereditary $K$-algebra, $T$ a
multiplicity-free  tilting module in $\mod H$, and $B=\End_H(T)$
the associated tilted algebra. For a positive integer $r\geq 2$,
consider the $r$-fold trivial extension algebra
\[
  T(B)^{(r)} =
  \left\{\begin{array}{c}
   \left[\begin{array}{cccccc}
    b_1 & 0 & 0 & & & \\
    f_2 & b_2 & 0 & & 0 & \\
    0 & f_3 & b_3 & & & \\
     &  & \ddots & \ddots & & \\
     & 0 &  & f_{r-1} & b_{r-1} & 0 \\
     &  &  & 0 & f_1 & b_1 \\
   \end{array}\right] \\
   b_1,\dots,b_{r-1} \in B, \ f_1,\dots,f_{r-1} \in D(B) \\
  \end{array}\right\}
\]
of $B$. Then $T(B)^{(r)}$ is a basic indecomposable
finite-dimensional selfinjective $K$- algebra which is isomorphic
to the orbit algebra $\widehat{B} / (\nu_{\widehat{B}}^r)$ of the
repetitive algebra $\widehat{B}$ of $B$ with respect to the
infinite cyclic group $(\nu_{\widehat{B}}^r)$ of automorphisms of
$\widehat{B}$ generated by the $r$-th power of the Nakayama
automorphism $\nu_{\widehat{B}}$ of $\widehat{B}$. Moreover, we
have the canonical Galois covering $F^{(r)}: \widehat{B}\to
\widehat{B} / (\nu_{\widehat{B}}^r) = T(B)^{(r)}$ and the
associated push-down functor $F_{\lambda}^{(r)}: \mod\widehat{B}
\to \mod T(B)^{(r)}$ is dense (see \cite[Sections 6 and 7]{SY} for
more details). We also note that $T(B)^{(r)}$ admits a quotient
algebra $B_1\times B_2\times\ldots\times B_{r-1}$ with $B_i=B$ for
any $i\in\{1,2,\ldots,r-1\}$.

Fix a positive integer $m$ and consider the selfinjective algebra $A_m=T(B)^{(4(m+1))}$. For each $i\in\{1,2,\ldots,m\}$, consider the quotient algebra
$B_{4i}=B$ of $A_m$ and the right $B_{4i}$-module $M_{4i}=\Hom_{H}(T,D(B))$, being the direct sum of all indecomposable modules lying on the canonical
section $\Delta_{4i}=\Delta_T$ of the connecting component $\mathcal{C}_{4i}=\mathcal{C}_{T}$ of $\Gamma_{B_{4i}}$ determined by $T$. Then, applying
arguments as in \cite[Section 2]{RSS1}, we conclude that
\[ M=\bigoplus_{i=1}^mM_{4i} \]
is a module in $\mod A_m$ which is not the middle of a short chain and $A_m/\ann_{A_m}(M)$ is isomorphic to the product
\[ \prod_{i=1}^mB_{4i} \]
of $m$ copies of the tilted algebra $B$.

\bigskip

\bigskip

\begin{center}\footnotesize
Faculty of Mathematics and Computer Science, Nicolaus Copernicus University, \\ Chopina 12/18, 87-100 Toru\'n, Poland \\
e-mail: jaworska@mat.uni.torun.pl \\
e-mail: pmalicki@mat.uni.torun.pl \\
e-mail: skowron@mat.uni.torun.pl \\
\end{center}

\end{document}